\def\me{\mathsf{e}}
\def\mv{\mathsf{v}}
\def\Sin{\mathrm{Sin\,}}
\def\Cos{\mathrm{Cos\,}}
\def\Cot{\mathrm{Cot\,}}
\def\ea{\EuFrak{a}}
\def\ba{\mathbb{A}}
\newtheorem{theo}{Theorem}
\newtheorem{lemma}[theo]{Lemma}
\newtheorem{prop}[theo]{Proposition}
\newtheorem{cor}[theo]{Corollary}
\newtheorem{defi}[theo]{Definition}
\newtheorem{rem}[theo]{Remark}
\newtheorem{exa}[theo]{Example}
\numberwithin{equation}{section} \numberwithin{theo}{section}
\author{Marjeta Kramar Fijav\v{z}, Delio Mugnolo, and Eszter Sikolya}
\title{Variational and semigroup methods for waves and diffusion in networks}
\date{}
\begin{document}

\maketitle

\begin{abstract}
We study diffusion and wave equations in networks. Combining semigroup and variational methods we obtain well-posedness and many nice properties of the solutions in general $L^p$-context. Following earlier articles of other authors, we discuss how the spectrum of the generator can be connected to the structure of the network. We conclude by describing asymptotic behavior of solutions to the diffusion problem.
\end{abstract}

\section{Introduction}

In this paper we continue the study of dynamical processes in networks using semigroup methods. While~\cite{[KS05]} and~\cite{[MS06]} studied flow and
transport processes, the aim of the present paper is to combine variational and semigroup methods in order to obtain the well-posedness of initial value problems
associated with diffusion and wave equations. We thus consider
first and second order problems
$$\dot{u}_j(t,x)= (c_j u'_j)'(t,x)\quad {\rm and}\quad \ddot{u}_j(t,x)= (c_j u'_j)'(t,x),\qquad t\geq 0,\; x\in (0,1),$$
where $c_j(\cdot)$ and $u_j(t,\cdot)$ are functions on parameterized
edges $\me_j$ of a finite network. The node conditions in
\eqref{netcp2} and \eqref{netcp} below impose continuity and
Kirchhoff laws in the ramification vertices. Problems of this kind
have already been treated by many authors both from the mathematical and physical communities -- among others, we mention the earlier articles of
Lumer~\cite{[Lu80]}, Ali Mehmeti~\cite{[Al84]}, Roth~\cite{[Ro84]}, von Below~\cite{[Be85]}, Nicaise~\cite{[Ni85]}, Exner~\cite{[Ex89]},
Cattaneo~\cite{[Ca97]}, Kottos and Smilansky~\cite{[KS97]}, Kostrykin and Schrader~\cite{[KS99]}, and Kuchment~\cite{[Ku02]}, as well as the monographs~\cite{[Ni93]},~\cite{[Al94]}, and~\cite{[LLS94]}, and the proceedings~\cite{[ABN01]}.

Since the pioneering work of Beurling and Deny in the 1950s,
variational methods have been greatly developed. In combination
with the theory of strongly continuous semigroups of operators,
they provide a powerful tool to discuss properties of solutions to
many parabolic and hyperbolic problems, cf.~\cite{[Da89]},~\cite{[Ar04]}, and~\cite{[Ou04]}. While $L^2$-techniques
like the lemma of Lax--Milgram have been used in most of the above
mentioned papers, 
our paper seems to be the first applying variational methods to obtain
positivity, ultracontractivity, and stability for network equations
in a general $L^p$-context. This is the main aim of Sections~2
and~3. We remark that positivity of the semigroup governing the diffusion problem with much more general nodal conditions has been characterized, by algebraic methods, in~\cite{[KS06]}.

We then proceed to study the qualitative behavior of the solutions.
To that purpose we obtain in Section~4 a characteristic equation for
the spectrum of the generator and describe the appropriate
eigensolutions. We reprove some results
from~\cite{[Be85]},~\cite{[Ni85]},~\cite{[Ni87b]}, and~\cite{[Be88b]} in our setting
with slight generalizations. We see that the spectrum is determined by
the structure of the network and corresponds to the spectrum of the
Laplacian matrix known from graph theory, see~\cite{[Mo91]}. We give
an explicit connection between the two spectra and show the impact
of this to our problem. This relates to the well-known question:
``Can one hear the shape of a drum?", first addressed by Kac
in~\cite{[Ka66]}. Concerning differential operators on graphs, the
analogous question ``Can one her the shape of a network?" has been
formulated and answered in the negative by von Below in his
contribution to~\cite{[ABN01]}. Quite surprisingly, the same
question was raised at the same time in the almost homonymous
paper~\cite{[GS01]} by Guttkin and Smilansky. They answered it
\emph{in the positive}, by studying Schr{\"o}dinger operator on a
finite, simple graph \emph{with rationally independent arc lengths}
and imposing some further assumptions on matching conditions at the
vertices. In graph theory, however, it is well known that there are
many graphs sharing the same spectrum, see~\cite{[DH03]}. Also in
our case, the spectrum itself does not determine the network.

In Section~5 we study the asymptotic behavior of solutions to the
diffusion problem. To our knowledge this topic has not yet been
properly treated by other authors. We show that the solutions always
converge towards an equilibrium with rate of convergence depending
on the structure of the network. This is discussed for special
classes of networks. In similar contexts, convergence to equilibria has already been discussed, e.g., in~\cite{[BN96]}.

\section{The wave equation on a network}

We consider a finite connected network, represented by a
finite graph $G$ with $m$ edges $\me_1,\dots,\me_m$ and $n$
vertices $\mv_1,\dots,\mv_n$. We assume that all
the vertices have degree at least 2, i.e., that each vertex is
incident to at least 2 edges. Furthermore, we assume that $G$ is
\emph{simple}, that is it has no multiple edges or loops. We
normalize and parameterize the edges on the interval $[0,1]$. The
structure of the network is given by the $n\times m$ matrices
$\Phi^+:=(\phi^+_{ij})$ and $\Phi^-:=(\phi^-_{ij})$ defined by
\begin{equation*}
\phi^+_{ij}:\left\{
\begin{array}{rl}
1, & \hbox{if } \me_j(0)=\mv _i,\\
0, & \hbox{otherwise},
\end{array}
\right.
\qquad\hbox{and}\qquad
\phi^-_{ij}:\left\{
\begin{array}{rl}
1, & \hbox{if } \me_j(1)=\mv _i,\\
0, & \hbox{otherwise.}
\end{array}
\right.
\end{equation*}
We refer to~\cite{[KS05]} for terminology. The $n\times m$ matrix
$\Phi:=(\phi_{ij})$ defined by $$\Phi:=\Phi^+-\Phi^-$$ is known in
graph theory as \emph{incidence matrix} of the graph $G$. Further,
let $\Gamma(\mv_i)$ be the set of all the indices of the edges
having an endpoint at $\mv _i$, i.e.,
$$\Gamma(\mv _i):=\left\{j\in \{1,\ldots,m\}: \me_j(0)=\mv _i\hbox{ or } \me_j(1)=\mv _i\right\}.$$
For the sake of simplicity, we denote the value of the functions
$c_j(\cdot)$ and ${u}_j(t,\cdot)$ at $0$ or $1$ by $c_j(\mv_i)$ and
$u_j(t,\mv_i)$, if $\me_j(0)=\mv _i\hbox{ or } \me_j(1)=\mv _i$,
respectively. With an abuse of notation, we also set
$u'_j(t,\mv_i)=c_j(\mv_i):=0$ whenever $j\notin\Gamma(\mv_i)$. When
convenient, we shall also write the functions $u_j$ in vector form,
i.e., $u=(u_1,\dots,u_m)^\top$.

We start with the second order problem
\begin{equation}\label{netcp2}
\left\{\begin{array}{rclll}
\ddot{u}_j(t,x)&=& (c_j u_j')'(t,x), &t\in{\mathbb R},\; x\in(0,1),\; j=1,\dots,m, & (a)\\
u_j(t,\mv _i)&=&u_\ell (t,\mv _i), &t\in{\mathbb R},\; j,\ell\in \Gamma(\mv _i),\; i=1,\ldots,n,& (b)\\
\sum_{j=1}^m \phi_{ij}\mu_{j} c_j(\mv_i) u'_j(t,\mv_i)&=& 0, &t\in{\mathbb R},\; i=1,\ldots,n,& (c)\\
u_j(0,x)&=&\mathsf{f}_{j}(x), &x\in (0,1),\; j=1,\dots,m, & (d)\\
\dot{u}_j(0,x)&=&\mathsf{g}_{j}(x), &x\in(0,1),\; j=1,\dots,m,& (e)
\end{array}
\right.
\end{equation}
on the network. Note that $c_j(\cdot)$ and $u_j(t,\cdot)$ are
functions on the edge $\me_j$ of the network, so that the
right-hand side of~$(\ref{netcp2}a)$ reads in fact as
$$(c_j u_j')'(t,\cdot)=\frac{\partial}{\partial x}\left( c_j
\frac{\partial}{\partial x}u_j\right)(t,\cdot), \qquad t\in{\mathbb R},\; j=1,\ldots,m.$$

The functions $c_1,\ldots,c_m$ are the \emph{weights} of the edges,
and throughout this section we assume that $0<c_j\in H^1(0,1)$,
$j=1,\ldots,m$. They represent the different speeds of propagation
along each edge of the network $G$.  The
equation~$(\ref{netcp2}b)$ represents the continuity of the values
attained by the system at the vertices. The coefficients $\mu_j$,
$j=1,\ldots,m$, are strictly positive constants that influence the distribution
of impulse happening in the ramification nodes according to the Kirchhoff-type law~$(\ref{netcp2}c)$.

We now introduce \emph{weighted incidence matrices}
 $\Phi^+_w:=(\omega^+_{ij})$ and $\Phi^-_w:=(\omega^-_{ij})$ with entries
\begin{equation*}
\omega^+_{ij}:=\left\{
\begin{array}{ll}
\mu_j c_j(\mv_i), & \hbox{if } \me_j(0)=\mv _i,\\
0, & \hbox{otherwise},
\end{array}
\right. \qquad\hbox{and}\qquad \omega^-_{ij}:=\left\{
\begin{array}{ll}
\mu_j c_j(\mv_i), & \hbox{if } \me_j(1)=\mv _i,\\
0, & \hbox{otherwise}.
\end{array}
\right.
\end{equation*}

With these notations, equation $(\ref{netcp2}b)$ can be rewritten as
\begin{equation}\label{cont}
\exists d\in {\mathbb C}^n \hbox{ \rm{s.t.} } (\Phi^+)^\top d=u(t,0)\hbox{ \rm{and} }(\Phi^-)^\top d=u(t,1),\qquad
t\in{\mathbb R},
\end{equation}
while the Kirchhoff law $(\ref{netcp2}c)$ becomes
\begin{equation*}
\label{kir}
\Phi_w^+ u'(t,0)=\Phi_w^- u'(t,1), \qquad t\in{\mathbb R}.
\end{equation*}

We are now in the position to rewrite our system in form of a
second order abstract Cauchy problem. First we consider the
(complex) Hilbert space
$$X_2:=\prod_{j=1}^m L^2(0,1; \mu_j dx)$$
endowed with the natural inner product
$$(f,g)_{X_2}:= \sum_{j=1}^m \int_0^1 f_j(x)\overline{g_j(x)} \mu_j dx,\qquad
f=\left(\begin{smallmatrix}f_1\\ \vdots\\
f_m\end{smallmatrix}\right),\;g=\left(\begin{smallmatrix}g_1\\ \vdots\\
g_m\end{smallmatrix}\right)\in X_2.$$

Observe that
$X_2$ is isomorphic to $\left( L^2(0,1)\right)^m$ with equivalence of norms.
Moreover, $X_2$ is in fact a Hilbert lattice whose positive cone
consists of $m$ copies of the positive cone of $L^2(0,1; \mu_j dx)\approx L^2(0,1)$.
On $X_2$ we define an operator
\begin{equation}\label{amain}
A:=\begin{pmatrix}
\frac{d}{dx}\left(c_1 \frac{d}{dx}\right) & & 0\\
 & \ddots &\\
0 & & \frac{d}{dx}\left(c_m \frac{d}{dx}\right) \\
\end{pmatrix}
\end{equation}
with domain
\begin{equation}\label{domamain}
\begin{array}{rl}
D(A):=&\left\{f\in \left(H^2(0,1)\right)^m: \Phi_w^+ f'(0)=\Phi_w^- f'(1)\hbox{ and }\right.\\[0.3em]
&\left.\;\;\;\exists d\in {\mathbb C}^n \hbox{ s.t. } (\Phi^+)^\top d=f(0)\hbox{ and }(\Phi^-)^\top d=f(1)
\right\}.
\end{array}
\end{equation}

With this notations, we can finally rewrite~\eqref{netcp2} in form of a second order abstract Cauchy problem
\begin{equation}\label{acp2}
\left\{\begin{array}{rcll}
\ddot{u}(t)&=& Au(t), &t\in{\mathbb R},\\
u(0)&=&\mathsf{f},\\
\dot{u}(0)&=&\mathsf{g},
\end{array}
\right.
\end{equation}
on $X_2$. By means of variational techniques, we are going to show that $A$ enjoys several nice properties. We follow the techniques of~\cite{[Da89]} and~\cite[Sec.~7.1]{[ABHN01]}.

\begin{lemma}\label{main}
Consider the sesquilinear form
\begin{equation*}
\ea(f,g):=\sum_{j=1}^m\int_0^1 \mu_j c_j(x) f'_j(x) \overline{g'_j(x)} dx
\end{equation*}
on the Hilbert space $X_2$ with domain
\begin{equation*}
D\left(\ea\right)=V:=\left\{f\in \left(H^1(0,1)\right)^m:\exists d\in {\mathbb C}^n \hbox{ s.t. } (\Phi^+)^\top
d=f(0)\hbox{ and } (\Phi^-)^\top d=f(1)\right\}.
\end{equation*}
Then $\ea$ is densely defined and has the following properties:
\begin{itemize}
\item ({\sl symmetry}) : $\ea(f,g)=\overline{\ea(g,f)}$ for all  $f,g \in D(\ea)$,
\item ({\sl positivity}) : $\ea(f,f)\geq 0$ for all  $f \in D(\ea)$,
\item ({\sl closedness}) : $V$ is complete for the form norm $\Vert f\Vert_{\ea}:= \sqrt{\ea(f,f)+\Vert f\Vert^2_{X_2}}$,
\item({\sl continuity}) : $\vert \ea(f,g)\vert \leq  M\Vert f\Vert_\ea \Vert g\Vert_\ea$
for some $M>0$ and all $f,g\in D(\ea)$.
\end{itemize}
\end{lemma}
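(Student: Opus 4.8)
The plan is to verify the four listed properties in turn; all of them are elementary once the regularity assumption $0<c_j\in H^1(0,1)$ is exploited, and the only point requiring an idea is closedness. First I would treat dense definedness: every $f\in\bigl(H^1_0(0,1)\bigr)^m$ satisfies $f(0)=f(1)=0$ and hence lies in $V$ (choose $d=0$), so in particular $\bigl(C_c^\infty(0,1)\bigr)^m\subset V$, which is dense in $\bigl(L^2(0,1)\bigr)^m\approx X_2$; thus $V$ is dense in $X_2$.

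Symmetry and positivity are immediate. The constants $\mu_j$ are real and each $c_j$ is real-valued and strictly positive, so $\overline{\ea(g,f)}=\sum_{j=1}^m\int_0^1\mu_j c_j(x) f'_j(x)\overline{g'_j(x)}\,dx=\ea(f,g)$, and $\ea(f,f)=\sum_{j=1}^m\int_0^1\mu_j c_j(x)|f'_j(x)|^2\,dx\ge 0$. For continuity I would apply the Cauchy--Schwarz inequality twice: once in $L^2(0,1;\mu_j c_j\,dx)$ on each edge and once in $\ell^2$ over $j=1,\dots,m$, obtaining $|\ea(f,g)|\le\ea(f,f)^{1/2}\ea(g,g)^{1/2}\le\|f\|_\ea\|g\|_\ea$, so $M=1$ works; note also the trivial bound $\ea(f,f)\le\|f\|_\ea^2$, which is what actually makes the form continuous for the form norm.

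The main obstacle is closedness. The key remark is that, since $c_j\in H^1(0,1)\hookrightarrow C[0,1]$ and $c_j>0$ on the compact interval $[0,1]$, there are constants $0<\gamma\le\Gamma<\infty$ with $\gamma\le c_j(x)\le\Gamma$ for all $x\in[0,1]$ and all $j$; consequently
$$\gamma\sum_{j=1}^m\int_0^1\mu_j|f'_j|^2\,dx\ \le\ \ea(f,f)\ \le\ \Gamma\sum_{j=1}^m\int_0^1\mu_j|f'_j|^2\,dx,$$
and therefore $\|\cdot\|_\ea$ is, on $V$, equivalent to the norm inherited from $\bigl(H^1(0,1)\bigr)^m$ (the weights $\mu_j$ being fixed positive constants). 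It then suffices to show that $V$ is a \emph{closed} subspace of $\bigl(H^1(0,1)\bigr)^m$. I would observe that the trace map $f\mapsto\bigl(f(0),f(1)\bigr)$ is continuous from $\bigl(H^1(0,1)\bigr)^m$ to $\comp^m\times\comp^m$, and that
$$W:=\Bigl\{(a,b)\in\comp^m\times\comp^m:\ \exists\, d\in\comp^n\ \text{with}\ (\Phi^+)^\top d=a\ \text{and}\ (\Phi^-)^\top d=b\Bigr\}$$
is a linear subspace of the finite-dimensional space $\comp^{2m}$, hence closed. Since $V$ is precisely the preimage of $W$ under the trace map, $V$ is closed in $\bigl(H^1(0,1)\bigr)^m$, hence complete, and by the norm equivalence above it is complete for $\|\cdot\|_\ea$ as well. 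This finishes the plan; the genuinely nontrivial input is only the finite dimensionality of the vertex space, which turns the vertex constraints defining $V$ into a closed condition.
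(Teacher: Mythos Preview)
Your proof is correct and follows essentially the same route as the paper: density via $\bigl(C_c^\infty(0,1)\bigr)^m\subset V$, symmetry and positivity from the real positivity of $\mu_j c_j$, closedness by showing the form norm is equivalent to the $\bigl(H^1(0,1)\bigr)^m$-norm and that $V$ is closed there, and continuity via Cauchy--Schwarz. Your argument is in fact slightly more explicit than the paper's---you justify the closedness of $V$ in $\bigl(H^1(0,1)\bigr)^m$ via the continuous trace map and finite-dimensionality of the target (the paper simply asserts this), and your continuity estimate with $M=1$ is sharper than the paper's constant $\tfrac{C}{2(c\wedge 1)}$---but the overall strategy is identical.
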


\begin{proof}
It is apparent that $V$ is a linear subspace of $X_2$. Observe that $\left(C^\infty_c(0,1)\right)^m\subset V$. It follows that $V$ is
dense in $X_2$, as by definition $L^2(0,1)$ is the closure of $C^\infty_c(0,1)$ in the $L^2$-norm.
By assumption, the weights $c_j$ are strictly positive, so that in particular $\ea$ is symmetric and also positive, since
\begin{equation*}
\ea(f,f) = \sum_{j=1}^m \int_0^1 \mu_j c_j(x)\vert f'_j(x)\vert^2 dx \geq 0\qquad\hbox{ for all } f\in V.
\end{equation*}

Furthermore, $V$ becomes a Hilbert space whenever equipped with the inner product
\begin{equation*}
(f,g)_V:=\sum_{j=1}^m\int_0^1 \left(f'_j(x)\overline{g'_j(x)} +
f_j(x)\overline{g_j(x)}\right) \mu_j dx,\qquad f,g\in V,
\end{equation*}
since $V$ is a closed subspace of $\big(H^1(0,1)\big)^m$. Set
$$c:=\min_{1\leq j\leq m}\min_{x\in[0,1]}c_j(x),\qquad
C:=\max_{1\leq j\leq m}\max_{x\in[0,1]}c_j(x).$$
Then one has
$$(c\wedge 1)\Vert f\Vert_V^2\leq \Vert f\Vert_{\ea}^2\leq (C\vee 1)\Vert f\Vert_V^2,\qquad f\in V,$$
so that the form norm $\Vert\cdot\Vert_\ea$ is equivalent to the
 norm $\Vert\cdot\Vert_V$. Since $V$ is complete with respect to $\Vert\cdot\Vert_V$, the closedness of $\ea$ follows at once.

Finally, $\ea$ is continuous. To see this, take $f,g\in V$ and observe that
\begin{eqnarray*}
\vert \ea(f,g)\vert&\leq& C\sum_{j=1}^m \vert \int_0^1 \mu_j f'_j(x) g'_j(x) dx\vert\\[0.3em]
&\leq& C\sum_{j=1}^m \Vert f'_j\Vert_{L^2(0,1; \mu_j dx)} \Vert g'_j\Vert_{L^2(0,1; \mu_j dx)}\\
&\leq& \frac{C}{2}
\left(\sum_{j=1}^m\Vert f'_j\Vert^2_{L^2(0,1; \mu_j dx)}\right)^\frac{1}{2}
\left(\sum_{j=1}^m\Vert g'_j\Vert^2_{L^2(0,1; \mu_j dx)}\right)^\frac{1}{2}\\
&\leq& \frac{C}{2\cdot (c\wedge 1)}\Vert f\Vert_\ea \Vert g\Vert_\ea,
\end{eqnarray*}
by the Cauchy--Schwartz inequality.
\end{proof}

\begin{defi}
From the form $\ea$ we can obtain a unique operator
$\left(B,D(B)\right)$ in the following way:
$$\begin{array}{rcl}
D(B)&:=&\left\{f\in V:\exists g\in X_2 \hbox{ s.t. } \ea(f,h)=(g,h)_{X_2}\; \forall h\in V\right\},\\
Bf&:=&-g.
\end{array}$$
We say that the operator $\left(B,D(B)\right)$ is \emph{associated
with the form $\ea$}.
\end{defi}

\begin{lemma}\label{ident}
The operator associated with the form $\ea$ is $\left(A,D(A)\right)$ defined in \eqref{amain}--\eqref{domamain}.
\end{lemma}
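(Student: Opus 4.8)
The plan is to prove the two inclusions $B\subseteq A$ and $A\subseteq B$ separately, using integration by parts to pass between the weak (form) formulation and the classical differential-operator formulation. The bulk of the work is the regularity argument showing $D(B)\subseteq D(A)$, so I would start with the easier reverse inclusion.

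\emph{Step 1: $A\subseteq B$.} Take $f\in D(A)$; by definition $f\in\big(H^2(0,1)\big)^m\subseteq V$, so $f$ is in the form domain. For $h\in V$, integrate by parts on each edge:
\begin{equation*}
\ea(f,h)=\sum_{j=1}^m\int_0^1\mu_j c_j f_j'\overline{h_j'}\,dx
=-\sum_{j=1}^m\int_0^1\mu_j (c_jf_j')'\overline{h_j}\,dx
+\sum_{j=1}^m\mu_j\big[c_j f_j'\overline{h_j}\big]_0^1 .
\end{equation*}
The bulk term is exactly $(-Af,h)_{X_2}$ since $(Af)_j=(c_jf_j')'$ and the inner product on $X_2$ carries the weights $\mu_j$. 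It remains to show the boundary term vanishes. Writing it out, the contribution at the vertices is
\begin{equation*}
\sum_{i=1}^n\Big(\sum_{j\in\Gamma(\mv_i)}\phi_{ij}\mu_j c_j(\mv_i)f_j'(\mv_i)\overline{h_j(\mv_i)}\Big),
\end{equation*}
and since $h\in V$ there is $d\in\comp^n$ with $h_j(\mv_i)=d_i$ for all $j\in\Gamma(\mv_i)$ (this is the meaning of the condition $(\Phi^+)^\top d=h(0)$, $(\Phi^-)^\top d=h(1)$); pulling $\overline{d_i}$ out, the inner sum is the $i$-th component of $\Phi_w^+ f'(0)-\Phi_w^- f'(1)$, which is zero because $f\in D(A)$. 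Hence $\ea(f,h)=(-Af,h)_{X_2}$ for all $h\in V$, so $f\in D(B)$ and $Bf=Af$.

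\emph{Step 2: $B\subseteq A$.} Take $f\in D(B)$, so $f\in V$ and there is $g\in X_2$ with $\ea(f,h)=(g,h)_{X_2}$ for all $h\in V$. Testing first against $h\in\big(C_c^\infty(0,1)\big)^m\subseteq V$ (supported in a single edge) shows $(c_jf_j')'=-\mu_j g_j/\mu_j=-g_j$ in the distributional sense on $(0,1)$; since $g_j\in L^2$ and $c_j\in H^1$ with $c_j$ bounded below, this forces $c_jf_j'\in H^1(0,1)$, hence $f_j'=(c_jf_j')/c_j\in H^1(0,1)$ (using $1/c_j\in H^1$, which holds because $c_j\in H^1$ is bounded away from $0$), hence $f_j\in H^2(0,1)$. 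Thus $f\in\big(H^2(0,1)\big)^m$ and $Af=g=-Bf$ pointwise. Now revisit the integration-by-parts identity from Step 1, which is now legitimate for this $f$: for all $h\in V$,
\begin{equation*}
(g,h)_{X_2}=\ea(f,h)=(-Af,h)_{X_2}+\sum_{i=1}^n\overline{d_i}\Big(\sum_{j\in\Gamma(\mv_i)}\phi_{ij}\mu_j c_j(\mv_i)f_j'(\mv_i)\Big),
\end{equation*}
and since $-Af=g$ the boundary term must vanish for \emph{every} admissible $d\in\comp^n$ arising from some $h\in V$. The final point is that every $d\in\comp^n$ is admissible: given $d$, one can construct $h\in\big(H^1(0,1)\big)^m$ with the prescribed vertex values $h_j(\mv_i)=d_i$ (e.g. affine functions on each edge), so each component of $\Phi_w^+f'(0)-\Phi_w^-f'(1)$ vanishes, i.e. the Kirchhoff condition holds and $f\in D(A)$.

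The main obstacle is the regularity bootstrap in Step 2: one must be careful that $c_j$ is only $H^1$, not smooth, so $(c_jf_j')'=-g_j\in L^2$ gives $c_jf_j'\in H^1$ directly, and then dividing by $c_j$ to recover $f_j'\in H^1$ requires knowing that multiplication by $1/c_j$ preserves $H^1(0,1)$ — which is exactly where the hypotheses $0<c_j\in H^1(0,1)$ (so $c_j$ is continuous, bounded, and bounded below on $[0,1]$, whence $1/c_j\in H^1\cap L^\infty$) are used. Everything else is routine integration by parts together with the bookkeeping that links the algebraic vertex conditions defining $V$ and $D(A)$ to the boundary terms.
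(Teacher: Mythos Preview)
Your proof is correct and follows essentially the same route as the paper's: integration by parts to show $A\subseteq B$, then testing against edge-supported $H^1_0$ functions for the $H^2$-regularity and against general $h\in V$ to recover the Kirchhoff condition for $B\subseteq A$. Apart from a harmless sign slip (you wrote $Af=g=-Bf$ where it should read $Af=-g=Bf$; you use the correct relation $-Af=g$ two lines later), your argument is in fact slightly more explicit than the paper's in justifying $f_j'\in H^1$ from $c_jf_j'\in H^1$ and in noting that every $d\in\comp^n$ is realized by some $h\in V$.
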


\begin{proof}
Denote by $\left(B,D(B)\right)$ the operator associated with $\ea$.
Let us first show that $A\subset B$. Take $f\in D(A)$. Then for all
$h\in V$
\begin{equation}\label{parts}
\begin{array}{rcl}
\ea(f,h)&=&\sum_{j=1}^m \int_0^1 \mu_j c_j(x) f'_j(x)\overline{h'_j(x)}dx\\[0.3em]
&=& \sum_{j=1}^m\left[\mu_j c_jf'_j\overline{h_j}\right]_0^1 - \sum_{j=1}^m \int_0^1 \mu_j (c_j f_j')'(x)\overline{h_j(x)}dx. 
\end{array}
\end{equation}

Using the incidence matrix $\Phi=\Phi^+ - \Phi^- $, the first term
above can be written as
$$\sum_{j=1}^m\left[\mu_j c_jf'_j\overline{h_j}\right]_0^1 = \sum_{j=1}^m\sum_{i=1}^n \mu_j c_j(\mv_i)(\phi_{ij}^- -\phi_{ij}^+)f'_j(\mv _i)\overline{h_j(\mv _i)}.$$
Observe now that the condition
$$\exists d\in {\mathbb C}^n\hbox{ s.t. } (\Phi^+)^\top d=h(0)\hbox{ and }(\Phi^-)^\top d=h(1)$$
in the definition of $V$ implies that $h$ is continuous in the
vertices, i.e., there exist $d_1,\ldots,d_n\in {\mathbb C}$ such
that $h_j(\mv _i)= d_i$ for all $j\in \Gamma(\mv _i)$,
$i=1,\ldots,n$. Summing up and using the other condition $\Phi^+_w
f'(0)=\Phi^-_w f'(1)$ in $D(A)$ we obtain that
$$\begin{array}{rcl}
\ea(f,h)&=& \sum_{i=1}^n \overline{d_i} \underbrace{\sum_{j=1}^m (\omega_{ij}^- -\omega_{ij}^+)f'_j(\mv _i)}_{=0} - \sum_{j=1}^m \int_0^1 (c_jf_j')'(x)\overline{h_j(x)}\mu_j dx\\
&=&-(Af,h)_{X_2},
\end{array}$$
which makes sense because $Af\in X_2$. The proof of the inclusion $A\subset B$ is completed.

To check the converse inclusion $B\subset A$ take $f\in D(B)$. By definition, there exists $g\in X_2$ such that
\begin{equation}\label{integr}
\sum_{j=1}^m \int_0^1 \mu_j c_j(x)f'_j(x)\overline{h'_j(x)}dx =\ea(f,h)=(g,h)_{X_2}= \sum_{j=1}^m \int_0^1 g_j(x)\overline{h_j(x)}\mu_j dx
\end{equation}
for all $h\in V$, hence in particular for all $h^j\in V$ of the form
\begin{equation*}
h^j=\left(\begin{smallmatrix}
0\\
\vdots\\
h_j\\
\vdots\\
0
\end{smallmatrix}\right)\leftarrow j^{\rm th}\hbox{ row},\;\; h_j\in H^1_0(0,1).
\end{equation*}
From this follows that~\eqref{integr} in fact implies
$$\int_0^1 \mu_j c_j(x)f'_j(x)\overline{h'_j(x)}dx = \int_0^1 g_j(x)\overline{h_j(x)}\mu_j dx\hbox{\; for all } j=1,\ldots,m,\;\; h_j\in H^1_0(0,1).$$
By definition of weak derivative this means that $c_j\cdot f_j'\in
H^1(0,1)$ for all $j=1,\ldots,m$. Since $0<c_j\in H^1(0,1)$,
it follows that in fact $f_j'\in H^1(0,1)$ for all
$j=1,\ldots,m$. We conclude that $f\in \left(H^2(0,1)\right)^m$.
Moreover, integrating by parts as in~\eqref{parts} we see that
if~\eqref{integr} holds for some $h\in V$, then
$$\sum_{i=1}^n {d_i} \sum_{j=1}^m (\omega_{ij}^- -\omega_{ij}^+)f'_j(\mv _i)=0,$$
where $d_i$ is the joint value attained at the vertex $\mv _i$ by
all $h_j$, $j\in\Gamma(\mv _i)$. Since $h\in V$ is arbitrary, this
means that
$$\sum_{j=1}^m (\omega_{ij}^- -\omega_{ij}^+)f'_j(\mv _i)=0\qquad \hbox{for all }i=1,\ldots,n,$$
that is, $\Phi_w^+ f'(0)=\Phi_w^- f'(1)$. Therefore $f\in D(A)$ and
\begin{equation*}
-\sum_{j=1}^m \int_0^1 \mu_j(c_jf'_j)'(x)\overline{h_j(x)}dx =\sum_{j=1}^m \int_0^1 g_j(x)\overline{h_j(x)}\mu_j dx
\end{equation*}
holds for all $h\in V$. This implies that $Af=-g$, and the proof is complete.
\end{proof}

We are now able to use some well-known results on sesquilinear
forms (cf.~\cite{[ABHN01],[Da89],[Ou04]}) in order to obtain nice
properties of our operator $A$.

\begin{prop}\label{generator}
The operator $\left(A,D(A)\right)$ defined
in~\eqref{amain}--\eqref{domamain} is self-adjoint and
dissipative. Thus, it generates a cosine operator function with
associated phase space $V\times X_2$. 
\end{prop}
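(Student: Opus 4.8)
The plan is to leverage Lemma~\ref{main} together with the standard correspondence between sesquilinear forms and self-adjoint operators. Since Lemma~\ref{ident} identifies $A$ as the operator associated with the form $\ea$, and since Lemma~\ref{main} shows $\ea$ is densely defined, symmetric, positive, closed, and continuous, the classical theory (as in~\cite{[ABHN01],[Da89],[Ou04]}) immediately yields that $A$ is self-adjoint. Concretely, one invokes the representation theorem for closed, densely defined, symmetric, positive forms: the associated operator is self-adjoint and bounded below. Here positivity of $\ea$ gives $(-Af,f)_{X_2}=\ea(f,f)\geq 0$ for all $f\in D(A)$, so in fact $A$ is negative semidefinite, hence dissipative: for $f\in D(A)$, $\mathrm{Re}(Af,f)_{X_2} = -\ea(f,f)\leq 0$.

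The second half is to conclude that $A$ generates a cosine operator function with phase space $V\times X_2$. First I would recall the standard fact (see~\cite[Sec.~3.14 and Ex.~3.14.16]{[ABHN01]} or the corresponding results on second-order Cauchy problems) that if $A$ is self-adjoint and bounded above — equivalently $-A$ is self-adjoint and bounded below — then $A$ generates a cosine operator function. This applies verbatim here since $-A\geq 0$ is self-adjoint. To pin down the phase space, I would use the characterization that the phase space associated to a cosine function generated by $A$ is $V\times X_2$, where $V=D((\lambda-A)^{1/2})$ for any $\lambda$ in the resolvent set with $\lambda-A$ positive; and by the form representation, the form domain $D(\ea)=V$ coincides precisely with $D((\mathrm{Id}-A)^{1/2})$. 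Thus the phase space is exactly $V\times X_2$ as claimed.

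The main obstacle — such as it is — is not the self-adjointness or dissipativity, which are essentially immediate from Lemmas~\ref{main} and~\ref{ident}, but rather making the identification of the phase space fully rigorous: one must check that the form domain $V$ equals the fractional domain $D((\mathrm{Id}-A)^{1/2})$ with equivalent norms. This is a general principle for operators associated with symmetric closed forms, but it deserves an explicit citation. I would point to the fact that for the operator $B$ associated with a symmetric, closed, positive, densely defined form $\mathfrak{b}$ with domain $W$, one has $W=D(B^{1/2})$ and $\mathfrak{b}(f,f)=\|B^{1/2}f\|^2$; applying this with $\mathfrak{b}=\ea+(\cdot,\cdot)_{X_2}$ and $B=\mathrm{Id}-A$ gives $V=D((\mathrm{Id}-A)^{1/2})$, which is the content needed. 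Everything else is a direct application of the cited semigroup/cosine-function theory, so the proof should be short, essentially a matter of assembling the pieces and citing the appropriate theorems.
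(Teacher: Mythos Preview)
Your proposal is correct and follows essentially the same route as the paper: both invoke Lemmas~\ref{main} and~\ref{ident} to place $A$ in the framework of operators associated with symmetric, positive, closed, densely defined forms, obtain self-adjointness and dissipativity from that, and then appeal to~\cite{[ABHN01]} for cosine-function generation. The paper is terser---it cites~\cite[Sec.~7.1, Prop.~7.1.1, Ex.~7.1.2]{[ABHN01]} directly and uses the spectral-theorem route (unitary equivalence to multiplication operators) for the cosine part---whereas you unpack the phase-space identification $V=D((\mathrm{Id}-A)^{1/2})$ explicitly; but the substance is the same.
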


\begin{proof}
By Lemmas \ref{main} and \ref{ident} we are in the situation
described in~\cite[Sec.~7.1]{[ABHN01]} for $H=X_2$, $V=D(\ea)$,
$(\cdot\vert\cdot)_V=\ea(\cdot ,\cdot)$, $\omega =1$, and $A=A_H$.
Thus the claim follows by~\cite[Proposition 7.1.1]{[ABHN01]},
\cite[Example 7.1.2]{[ABHN01]} and the fact that self-adjoint operators are unitarily equivalent to multiplication
operators (see also the remark at p.~413 in~\cite{[ABHN01]}).
\end{proof}

We can now state the main result of this section. This generalizes the well-posedness and regularity results
in~\cite{[Al84]},~\cite{[Al86]},~\cite{[Be88]}, and~\cite{[CF03]}, where only the case of constant or smooth coefficients $c_1,\ldots,c_m$ was considered

\begin{theo}~\label{wellp2}
The problem~\eqref{netcp2} is well-posed, i.e., for all
$\mathsf{f}\in V$ and $\mathsf{g}\in X_2$ it admits a unique
mild solution that continuously depends on the initial data.

If further $c_j\in C^\infty[0,1]$, $j=1,\ldots,m$, and the initial
conditions $\mathsf{f},\mathsf{g}\in
\left(C^\infty_c[0,1]\right)^m$, then the solution is of class
$\left(C^\infty[0,1]\right)^m$.
\end{theo}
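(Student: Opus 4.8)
The plan is to deduce well-posedness from the cosine operator function machinery already set up in Proposition~\ref{generator}, and then to bootstrap regularity via the abstract smoothness of $C^\infty$-vectors for self-adjoint generators. I would proceed in two clearly separated parts, mirroring the two assertions of the theorem.

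\textit{Well-posedness.} By Proposition~\ref{generator} the operator $A$ generates a cosine operator function $(\mathrm{Cos}(t,A))_{t\in\mathbb R}$ with associated sine family $(\mathrm{Sin}(t,A))_{t\in\mathbb R}$ and phase space $V\times X_2$. The standard theory of second order abstract Cauchy problems (see~\cite[Sec.~3.14]{[ABHN01]}) then gives, for every $\mathsf f\in V$ and $\mathsf g\in X_2$, a unique mild solution of~\eqref{acp2} given by
\begin{equation*}
u(t)=\mathrm{Cos}(t,A)\mathsf f+\mathrm{Sin}(t,A)\mathsf g,\qquad t\in\mathbb R,
\end{equation*}
which lies in $C(\mathbb R;V)\cap C^1(\mathbb R;X_2)$ and depends continuously on $(\mathsf f,\mathsf g)\in V\times X_2$, because $\mathrm{Cos}(t,A)$ and the rescaled sine operator are bounded on $V\times X_2$ uniformly on compact time intervals. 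Unravelling the definitions of $A$, $D(A)$, $X_2$ and $V$ shows that $u$ solves~\eqref{acp2} if and only if its components $u_j$ solve~\eqref{netcp2}, since membership of $u(t)$ in $V$ encodes the continuity condition~$(\ref{netcp2}b)$ and the equation $\ddot u(t)=Au(t)$ together with $D(A)$ encodes~$(\ref{netcp2}a)$ and the Kirchhoff condition~$(\ref{netcp2}c)$; the initial conditions~$(\ref{netcp2}d)$--$(\ref{netcp2}e)$ are $u(0)=\mathsf f$, $\dot u(0)=\mathsf g$. This proves the first assertion.

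\textit{Smoothness.} For the second part I would argue that under the stated hypotheses the initial data are $C^\infty$-vectors for $A$, i.e.\ $\mathsf f,\mathsf g\in\bigcap_{k\ge 1}D(A^k)$, and that this space is invariant under $\mathrm{Cos}(t,A)$ and $\mathrm{Sin}(t,A)$; hence $u(t)\in\bigcap_k D(A^k)$ for all $t$. The invariance is immediate because $A$ commutes with its own cosine and sine families on their natural domains. The two nontrivial points are: (i) that $(C^\infty_c[0,1])^m\subset\bigcap_k D(A^k)$ when all $c_j\in C^\infty[0,1]$ — this follows inductively, since $Af$ for $f\in D(A)$ with smooth compactly supported components again has smooth compactly supported components (the node conditions in $D(A)$ are automatically satisfied by functions vanishing near $0$ and $1$), so $A^k$ maps $(C^\infty_c[0,1])^m$ into itself; and (ii) that $\bigcap_k D(A^k)\subset(C^\infty[0,1])^m$ — this is the elliptic-regularity step: if $f\in D(A^k)$ then componentwise $(c_j f_j')'\in H^{2(k-1)}(0,1)$, and since $0<c_j\in C^\infty[0,1]$ one peels off derivatives to get $f_j\in H^{2k}(0,1)$, whence $f\in(H^{2k}(0,1))^m$ for all $k$ and Sobolev embedding yields $f\in(C^\infty[0,1])^m$. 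Combining (i) and (ii) with the invariance gives $u(t)\in(C^\infty[0,1])^m$ for every $t$.

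The main obstacle I expect is the bookkeeping in step (ii): one has to verify that the iterated regularity really does propagate through the weighted second-order operator $\frac{d}{dx}(c_j\frac{d}{dx})$ and that no boundary obstruction appears at $0$ and $1$ — but since the relevant functions land in $D(A^k)$, the node conditions are part of the hypothesis rather than something to be checked, so this reduces to the one-dimensional interior regularity statement ``$c\in C^\infty$, $c>0$, $(cf')'\in H^{s}$ $\Rightarrow$ $f\in H^{s+2}$'', which is elementary. A minor additional point is to note that $C^\infty_c[0,1]$ here means $C^\infty$ functions on $[0,1]$ whose support is a compact subset of the open interval $(0,1)$, so that indeed they sit inside $D(A)$ irrespective of the matrices $\Phi^\pm$, $\Phi^\pm_w$.
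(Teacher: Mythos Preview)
Your proof is correct and follows essentially the same route as the paper: well-posedness via the cosine/sine operator formula from Proposition~\ref{generator} and~\cite[Sec.~3.14]{[ABHN01]}, and regularity via the invariance of $\bigcap_k D(A^k)$ under the cosine and sine families together with the identifications $(C^\infty_c[0,1])^m\subset \bigcap_k D(A^k)\subset (C^\infty[0,1])^m$. The paper compresses the second part into the single phrase ``basic properties of cosine and sine operator functions'', but the later proof of the heat-equation analogue explicitly refers back to the inclusion $D(A^\infty)\subset (C^\infty[0,1])^m$ ``as in the proof of Theorem~\ref{wellp2}'', confirming that your expanded argument is exactly what is intended.
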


\begin{proof}
It is well-known (see e.g.~\cite[Cor.~3.14.12]{[ABHN01]}) that
$$u(t):=C(t,A)\mathsf{f}+ S(t,A)\mathsf{g},\qquad t\in{\mathbb R},$$
yields the unique mild solution to~\eqref{acp2} for all initial
data $(\mathsf{f},\mathsf{g})$ in the phase space, where we denote by
$(C(t,A))_{t\in{\mathbb R}}$ and $(S(t,A))_{t\in{\mathbb R}}$
the cosine and sine operator functions generated by $A$,
respectively. 
The assertion about regularity of solutions follows directly from basic properties of cosine and sine operator functions.
\end{proof}

\section{The heat equation on a network}

We now consider again the same network $G$ and, under the same assumptions and with the same notations of Section~2, we turn our
attention to the first order problem
\begin{equation}\label{netcp}
\left\{\begin{array}{rcll}
\dot{u}_j(t,x)&=& (c_j u_j')'(t,x), &t\geq 0,\; x\in(0,1),\; j=1,\dots,m,\\
u_j(t,\mv _i)&=&u_\ell (t,\mv _i), &t\geq 0,\; j,\ell\in \Gamma(\mv _i),\; i=1,\ldots,n,\\
\sum_{j=1}^m \mu_j\phi_{ij} c_j(\mv_i) u'_j(t,\mv_i)&=& 0, &t\geq 0,\; i=1,\ldots,n,\\
u_j(0,x)&=&{\mathsf{f}}_{j}(x), &x\in (0,1),\; j=1,\dots,m,\\
\end{array}
\right.
\end{equation}
This equation describes a diffusion process that takes place in a
network and $c_1,\ldots,c_m\in C^1[0,1]$ are (variable) diffusion coefficients
or conductances. Again, we are imposing continuity and
Kirchhoff-type conditions in the ramification nodes (controlled by
some constants $\mu_1,\ldots,\mu_m$).

It is already known that such a problem is well-posed in an $L^2$-context, cf.~\cite{[Be88]}. Moreover,  at least for the case of constant weights $c_1,\ldots,c_m$ and $\mu_1=\ldots=\mu_m=1$  the heat kernel has been computed in~\cite{[Ni87]}, thus yielding well-posedness in other $L^p$-spaces. We show by variational methods that the
semigroup governing~\eqref{netcp} is $L^\infty$-contractive, and hence we can extend the well-posedness result to an $L^p$-context by interpolation in the general case of variable diffusion coefficients. In particular, the analyticity of the $L^p$-semigroups seems to be a new result. Also observe that, by the bounded perturbation theorem, this also yields well-posedness for the Cauchy problem associated to the analogous cable equation, cf.~\cite{[Ni87b]}.

Let
$$X_p:=\prod_{j=1}^m L^p(0,1; \mu_j dx),\qquad p\in [1,\infty].$$
We have already seen in Proposition~\ref{generator} that $A$ is a
self-adjoint and dissipative operator on $X_2$. By the spectral theorem, this shows that $A$ generates a contractive, analytic semigroup of angle $\frac{\pi}{2}$, and in particular the first order abstract Cauchy problem
\begin{equation*}
\left\{\begin{array}{rcll}
\dot{u}(t)&=& Au(t), &t\geq 0,\\
u(0)&=&{\mathsf{f}},\\
\end{array}
\right.
\end{equation*}
is well-posed in $X_2$.  In fact, much more can be said.

\begin{lemma}\label{bd}
The semigroup $(T_2(t))_{t\geq 0}$ on $X_2$, associated with $\ea$,
is \emph{sub-Markovian}, i.e., it is real, positive, and contractive
on $X_\infty$.
\end{lemma}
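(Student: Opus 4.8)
The plan is to verify the criterion for sub-Markovianity due to Beurling--Deny and Ouhabaz (see~\cite[Thm.~2.13 and Cor.~2.17]{[Ou04]}), which characterizes positivity and $L^\infty$-contractivity of the semigroup in terms of invariance properties of the form domain $V$ under certain nonlinear operations. For a symmetric, closed, positive form, the associated semigroup is \emph{real and positive} if and only if $f\in V$ implies $|f|\in V$ and $\ea(|f|,|f|)\leq\ea(f,f)$ (equivalently, $f\in V$ real implies $f^+\in V$ and $\ea(f^+,f^-)\leq 0$); it is $L^\infty$-contractive if and only if $f\in V$ real implies $(1\wedge f)^+ = (0\vee f)\wedge 1\in V$ and $\ea((1\wedge f)^+,(f-1)^+)\leq 0$. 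So the whole proof reduces to checking that these truncations stay in $V$ and satisfy the form inequalities.

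First I would recall that $V$ consists of those $f\in\big(H^1(0,1)\big)^m$ that are \emph{continuous across the vertices} in the sense made explicit in the proof of Lemma~\ref{ident}: there is $d\in\comp^n$ with $f_j(\mv_i)=d_i$ for all $j\in\Gamma(\mv_i)$. The key structural observation is that the nonlinear operations in question — taking $|f|$, $f^+$, or the truncation $(0\vee f)\wedge 1$ — are applied \emph{componentwise} and are \emph{continuous functions} $\comp\to\comp$ (or $\reals\to\reals$) that fix $0$. Two facts then do all the work. On one hand, each such operation maps $H^1(0,1)$ into itself (this is the classical chain rule for Sobolev functions, valid for Lipschitz functions vanishing at $0$), with the pointwise a.e.\ bound $|(G\circ f_j)'|\leq |f_j'|$ for $G$ any of these $1$-Lipschitz maps. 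On the other hand, since each $G$ is continuous, if $f_j(\mv_i)=f_\ell(\mv_i)=d_i$ then $(G\circ f_j)(\mv_i)=(G\circ f_\ell)(\mv_i)=G(d_i)$, so the transformed vector is again continuous across every vertex, i.e.\ it lies in $V$. Hence $V$ is invariant under all three operations.

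It then remains to check the form inequalities. For positivity: if $f\in V$ is real, then $f^+,f^-\in V$ and, since $f^+$ and $f^-$ have disjoint supports up to the (measure-zero) level set $\{f=0\}$, we get $(f^+)_j'\,(f^-)_j' = 0$ a.e.\ on $(0,1)$ for each $j$, so
\begin{equation*}
\ea(f^+,f^-)=\sum_{j=1}^m\int_0^1 \mu_j c_j(x)\,(f^+_j)'(x)\,(f^-_j)'(x)\,dx = 0\leq 0.
\end{equation*}
For $L^\infty$-contractivity: with $v:=(0\vee f)\wedge 1$ and $w:=(f-1)^+$, the functions $v$ and $w$ are again supported on essentially disjoint sets ($v$ varies only where $0<f<1$, while $w$ varies only where $f>1$), so $v_j'\,w_j'=0$ a.e., whence $\ea(v,w)=0\leq 0$. (Realness of the semigroup, which is part of the claim, follows from the positivity criterion once one also notes the form has real coefficients, so that $\ea(\bar f,\bar g)=\overline{\ea(f,g)}$ and real $L^2$ is invariant; alternatively it is automatic from self-adjointness plus positivity via~\cite{[Ou04]}.)

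The only genuinely delicate point — the ``main obstacle'' — is the verification that the truncated functions genuinely remain in $V$, i.e.\ that applying a Lipschitz map componentwise does not destroy the vertex-continuity encoded by the existence of the vector $d\in\comp^n$. This is where one must use that the operations are \emph{the same scalar function applied on every edge} and that point evaluations of $H^1(0,1)$-functions are well defined (Sobolev embedding $H^1(0,1)\hookrightarrow C[0,1]$); continuity of $G$ then propagates the matching conditions. Everything else — membership in $\big(H^1(0,1)\big)^m$ and the chain-rule gradient bounds — is standard one-dimensional Sobolev calculus, and the form inequalities are immediate from the disjoint-support structure of the relevant truncations. One should also remark that, since $\ea$ has no zero-order part, these inequalities hold with equality, which is consistent with the semigroup being not merely sub-Markovian but Markovian (conservative) — a point presumably taken up in the next results on the diffusion problem.
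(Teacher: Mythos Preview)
Your proof is correct and follows essentially the same route as the paper: verify the Ouhabaz/Beurling--Deny criteria by showing that $V$ is stable under the relevant Lipschitz truncations (using that vertex-continuity is preserved when the same continuous scalar function is applied on every edge) and that the required form inequalities hold---in fact with equality---because the derivatives of the truncated pieces have disjoint supports. One minor slip: in the $L^\infty$-contractivity criterion the inequality should read $\ea((1\wedge f)^+,(f-1)^+)\geq 0$ (as in the paper and in~\cite[Cor.~2.17]{[Ou04]}), not $\leq 0$, but since your computation gives $0$ this does not affect the argument.
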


\begin{proof}
By~\cite[Prop.~2.5, Thm.~2.7, and Cor.~2.17]{[Ou04]}, we need to
check that the following criteria are verified for the domain $V$
of $\ea$:
\begin{itemize}
\item ${f}\in V \Rightarrow \overline{f}\in V \hbox{ and } {\ea}({\rm Re}{f},{\rm Im}{f})\in\mathbb{R}$,
\item ${f}\in V, f\hbox{ real-valued }\Rightarrow \vert {f}\vert \in V \hbox{ and } \ea(\vert f\vert,\vert f\vert)\leq \ea(f,f)$,
\item $0\leq f\in V \Rightarrow 1\wedge {f}\in V \hbox{ and } \ea(1\wedge f,(f-1)^+)\geq 0$.
\end{itemize}

It is clear that $\overline{k}\in H^1(0,1)$ if $k\in H^1(0,1)$. Further, if $k$ is real valued, then $\vert
k\vert\in H^1(0,1)$ and $\vert k\vert'={\rm sign}k\cdot k'$, and if $0\leq k$, then $1\wedge k\in H^1(0,1)$ with
$(1\wedge k)'=k'{\mathbf{1}}_{\{k<1\}}$ and $((k-1)^{+})'=k'{\mathbf{1}}_{\{k>1\}}$.

By definition, the subspace $V$ contains exactly those functions
on the network that are continuous in the vertices (see
\eqref{cont}). Take any $f\in V$. By definition we have
$\overline{f_j}=(\overline{f})_j$, $1\leq j\leq m$. It follows
from the above arguments that $\overline{f}\in
\left(H^1(0,1)\right)^m$, and one can see that the continuity of
the values attained by $f$ in the vertices is preserved after
taking the complex conjugate $\overline{f}$. Hence,
$\overline{f}\in V$. Moreover, $\ea({\rm Re}{f},{\rm Im}{f})$ is
the sum of $m$ integrals. Recall that the weights are real-valued,
positive functions. Since all the integrated functions are
real-valued, it follows that ${\ea}({\rm Re}{f},{\rm
Im}{g})\in\mathbb R$. Thus, the first criterion has been checked.

Moreover, if $f$ is a real-valued function in $V$, then $\vert
f_j\vert =\vert f\vert_j$, $1\leq j\leq m$, and one sees as above
that $\vert f\vert\in V$. In particular, $\vert\vert
f\vert'\vert^2=\vert f'\vert^2$, and there holds
$$\ea(\vert f\vert,\vert f\vert)=\sum_{j=1}^m\int_0^1 \mu_j c_j(x)\vert f_j'(x)\vert^2 dx=\ea(f,f).$$
This shows that the second criterion applies.

Finally, take $0\leq f\in V$. Then
$$1\wedge f=1\wedge \left(\begin{smallmatrix}
f_1\\
\vdots\\
f_m\\
\end{smallmatrix}\right)
= \left(\begin{smallmatrix}
1\wedge f_1\\
\vdots\\
1\wedge f_m\\
\end{smallmatrix}\right),
$$
with all the functions $1\wedge f_j\in H^1(0,1)$, hence $1\wedge
f\in \left(H^1(0,1)\right)^m$. Again, the continuity of
$f$ in the vertices imposes the same property to the function
$1\wedge f$, i.e., $1\wedge f\in V$. Further, there holds
\begin{eqnarray*}
\ea(1\wedge f,(f-1)^+)&=&\sum_{j=1}^m\int_0^1 \mu_j c_j (1\wedge f_j)'(x) ((f_j-1)^+)'(x) dx\\[0.3em]
&=& \sum_{j=1}^m\int_0^1 \mu_j c_j f_j'(x) {\mathbf{1}}_{\{f_j<1\}}(x) f_j'(x) {\mathbf{1}}_{\{f_j>1\}}(x) dx=0.
\end{eqnarray*}
We have checked also the third criterion, thus the claim follows.
\end{proof}

\begin{lemma}\label{ultralemma}
The semigroup $(T_2(t))_{t\geq 0}$ on $X_2$ associated with $\ea$ is ultracontractive. In particular, it satisfies the estimate
\begin{equation}\label{ultra}
\Vert T_2(t)f\Vert_{X_\infty} \leq  M t^{-\frac{1}{4}}\Vert f\Vert_{X_2}
\qquad\hbox{ for all }t\in (0,1],\; f\in{X_2},
\end{equation}
for some constant $M$.
\end{lemma}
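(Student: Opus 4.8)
The plan is to derive a \emph{Nash-type inequality} for the form $\ea$ and then to invoke the classical equivalence between such inequalities and ultracontractivity bounds for sub-Markovian semigroups (see~\cite{[Da89]} and~\cite{[Ou04]}). The power $t^{-1/4}$ reflects the one-dimensional nature of the edges: already on $\mathbb R$ the heat semigroup maps $L^2$ into $L^\infty$ at rate $t^{-1/4}$, and this is precisely the borderline case in which a Nash argument, rather than a Sobolev embedding, is the right tool. Let me stress that the cheap bound obtained by combining the analyticity of $(T_2(t))_{t\ge 0}$ from Proposition~\ref{generator} with the one-dimensional embedding $V\hookrightarrow X_\infty$ (via $\Vert h\Vert_V^2=\Vert(I-A)^{1/2}h\Vert_{X_2}^2$ and $\Vert(I-A)^{1/2}T_2(t)\Vert_{\mathcal L(X_2)}\lesssim t^{-1/2}$) yields only $\Vert T_2(t)\Vert_{\mathcal L(X_2,X_\infty)}\le Mt^{-1/2}$; to reach the sharp exponent one needs the argument below, whose final step is a duality trick based on self-adjointness.

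The main step is to prove
\[
\Vert f\Vert_{X_2}^{6}\le C_N\bigl(\ea(f,f)+\Vert f\Vert_{X_2}^2\bigr)\,\Vert f\Vert_{X_1}^{4},\qquad f\in V .
\]
This rests on the scalar inequality $\Vert g\Vert_{L^2(0,1)}^3\le 3\,\Vert g\Vert_{H^1(0,1)}\,\Vert g\Vert_{L^1(0,1)}^2$ for $g\in H^1(0,1)$, which follows from the one-dimensional Gagliardo--Nirenberg bound $\Vert g\Vert_\infty^2\le 3\Vert g\Vert_{L^2}\Vert g\Vert_{H^1}$ (itself a consequence of $|g(x)|^2\le|g(y)|^2+2\int_0^1|g||g'|$, integrated in $y$) together with $\Vert g\Vert_{L^2}^2\le\Vert g\Vert_\infty\Vert g\Vert_{L^1}$. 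Summing over the $m$ edges, using $(\sum_{j=1}^m a_j)^3\le m^2\sum_{j=1}^m a_j^3$, the boundedness of the $\mu_j$ away from $0$ and $\infty$, and the lower bound $\ea(f,f)\ge c\sum_j\mu_j\Vert f_j'\Vert_{L^2}^2$ with $c=\min_{j,x}c_j(x)>0$, one then obtains the displayed inequality on $V$. The lower-order term $\Vert f\Vert_{X_2}^2$ cannot be removed, since constants lie in $V$ and are annihilated by $\ea$; this feature is exactly what makes the bookkeeping here the crux of the proof.

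To finish: by Lemma~\ref{bd} the semigroup is sub-Markovian, hence (dualizing the $X_\infty$-contractivity and using the self-adjointness of $T_2(t)$) also $X_1$-contractive. For $f\in X_2\subset X_1$ I would set $\psi(t):=\Vert T_2(t)f\Vert_{X_2}^2$; since $A$ is the operator associated with $\ea$ (Lemma~\ref{ident}), $\psi'(t)=-2\ea(T_2(t)f,T_2(t)f)$, and feeding in the Nash inequality together with $\Vert T_2(t)f\Vert_{X_1}\le\Vert f\Vert_{X_1}$ gives $\psi^3\le C_N\bigl(-\tfrac12\psi'+\psi\bigr)\Vert f\Vert_{X_1}^4$. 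The substitution $\phi(t):=e^{-2t}\psi(t)$ absorbs the lower-order term at the cost of a harmless constant on $(0,1]$, and integrating the resulting differential inequality for $(\phi^{-2})'$ yields $\psi(t)\le M^2t^{-1/2}\Vert f\Vert_{X_1}^2$, i.e. $\Vert T_2(t)\Vert_{\mathcal L(X_1,X_2)}\le Mt^{-1/4}$ for $t\in(0,1]$. Finally, the self-adjointness of $T_2(t)$ on $X_2$ together with the identification $X_\infty=(X_1)^*$ gives $\Vert T_2(t)\Vert_{\mathcal L(X_2,X_\infty)}\le\Vert T_2(t)\Vert_{\mathcal L(X_1,X_2)}\le Mt^{-1/4}$, which is~\eqref{ultra}. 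The genuine obstacle is the Nash inequality of the second paragraph; everything else is the standard Davies--Nash machinery, available once Lemmas~\ref{bd} and~\ref{ident} and Proposition~\ref{generator} are in place.
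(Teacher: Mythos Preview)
Your proof is correct and follows essentially the same route as the paper: establish a Nash-type inequality $\Vert f\Vert_{X_2}\le M\Vert f\Vert_\ea^{1/3}\Vert f\Vert_{X_1}^{2/3}$ on $V$ (equivalently, your displayed sixth-power version) by summing the one-dimensional Nash inequality over the edges, and then pass to the ultracontractivity bound~\eqref{ultra}. The only differences are cosmetic: the paper quotes the scalar Nash inequality from~\cite{[Ma85]} whereas you derive it via the Gagliardo--Nirenberg bound $\Vert g\Vert_\infty^2\lesssim\Vert g\Vert_{L^2}\Vert g\Vert_{H^1}$, and the paper invokes~\cite[Thm.~6.3]{[Ou04]} as a black box for the implication ``Nash $\Rightarrow$ ultracontractive'' whereas you spell out the underlying ODE-and-duality argument explicitly.
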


\begin{proof}
The form norm $\Vert\cdot\Vert_\ea$ on $V$ is equivalent to the norm $\Vert\cdot\Vert_V$, cf.~the proof of
Lemma~\ref{main}. Thus, by~\cite[Thm.~6.3 and following remark]{[Ou04]} it suffices to show that there holds
\begin{equation*}
\Vert f\Vert_{X_2}\leq M \Vert f\Vert_{V}^{\frac{1}{3}} \cdot \Vert f\Vert^{\frac{2}{3}}_{X_1}\qquad\hbox{for all } f\in V,
\end{equation*}
for some constant $M$. Recall the Nash inequality
\begin{equation}\label{nash}
\begin{array}{rcl}
\Vert k\Vert_{L^2(0,1)} &\leq& M_1 \left(\Vert k'\Vert_{L^2(0,1)}+\Vert k\Vert_{L^1(0,1)}\right)^{\frac{1}{3}} \cdot \Vert k\Vert_{L^1(0,1)}^{\frac{2}{3}}\\
&\leq & M_1 \Vert k\Vert_{H^1(0,1)}^{\frac{1}{3}} \cdot \Vert k\Vert_{L^1(0,1)}^{\frac{2}{3}},
\end{array}
\end{equation}
which is valid for all $k\in H^1(0,1)$ and some constant $M_1$, cf.~\cite[Thm.~1.4.8.1]{[Ma85]}.

Take finally $f\in V$ and observe that by~\eqref{nash}
\begin{eqnarray*}
\Vert f\Vert_{X_2}^2&=& \sum_{j=1}^m \Vert f_j\Vert_{L^2(0,1;\mu_j dx)}^2 \leq
M_1^2 \sum_{j=1}^m \Vert f_j\Vert_{H^1(0,1;\mu_j dx)}^{\frac{2}{3}} \cdot \Vert f_j\Vert_{L_1(0,1;\mu_j dx)}^{\frac{4}{3}}\\
&\leq & M_2 \left(\sum_{j=1}^m \Vert f_j\Vert_{H^1(0,1;\mu_j dx)}\right)^{\frac{2}{3}} \cdot \left(\sum_{j=1}^m\Vert f\Vert_{L^1(0,1;\mu_j dx)}\right)^{\frac{4}{3}}\\
&\leq & M_2 \Vert f\Vert_{V}^{\frac{2}{3}} \cdot \Vert f\Vert_{X_1}^{\frac{4}{3}},
\end{eqnarray*}
using the H\"{o}lder inequality. Thus, the claim follows.
\end{proof}

The following now holds by~\cite[Thm.~1.4.1, Thm.~1.6.4, and Thm.~2.1.5]{[Da89]} and~\cite[Thm.~3.13]{[Ou04]}.

\begin{cor}\label{comp}
The semigroup $(T_2(t))_{t\geq 0}$ extends to a family of compact,
contractive, positive one-parameter semigroups
$(T_p(t))_{t\geq 0}$ on $X_p$, $1\leq p\leq \infty$. Such
semigroups are strongly continuous if $p\in[1,\infty)$, and
analytic of angle $\frac{\pi}{2}-\arctan\frac{\vert
p-2\vert}{2\sqrt{p-1}}$ for $p\in(1,\infty)$.

Moreover, the spectrum of $A_p$ is independent of $p$, where  $A_p$
denotes the generator of $(T_p(t))_{t\geq 0}$, $1\leq p\leq \infty$.
\end{cor}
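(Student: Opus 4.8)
The plan is to verify the hypotheses of the four results cited just above; together they yield the whole statement. We already know from Lemma~\ref{bd} that $(T_2(t))_{t\ge 0}$ is sub-Markovian, from Lemma~\ref{ultralemma} that it is ultracontractive, and from Proposition~\ref{generator} that $A$ is self-adjoint on $X_2$. Sub-Markovianity means that $(T_2(t))$ is real, positive, contractive on $X_2$, and contractive on $X_\infty$; since $X_2\cap X_\infty$ is dense in every $X_p$ over the finite measure space underlying $X_p$, the Riesz--Thorin interpolation theorem (\cite[Thm.~1.4.1]{[Da89]}) produces a consistent family of contractions $T_p(t)$ on $X_p$ for $1\le p\le\infty$ (the range $1\le p<2$ being reached by duality). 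These are positive and satisfy the semigroup law, since consistency together with density transports both identities from $X_2$. For $p\in[1,\infty)$ strong continuity follows by the standard density argument: on the dense set $X_2\cap X_\infty$ one has $T_p(t)f=T_2(t)f\to f$ in $X_2$ as $t\downarrow 0$, hence in $X_p$ --- using $\|g\|_{X_p}\le c_p\|g\|_{X_2}$ when $p\le 2$, and $\|g\|_{X_p}\le\|g\|_{X_\infty}^{1-2/p}\|g\|_{X_2}^{2/p}$ together with $\sup_t\|T_p(t)f-f\|_{X_\infty}\le 2\|f\|_{X_\infty}$ when $p>2$ --- and this extends to all of $X_p$ by uniform boundedness. (No claim is made, or needed, for $p=\infty$.)

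For analyticity I would invoke the Stein-type interpolation theorem for symmetric sub-Markovian semigroups (\cite[Thm.~3.13]{[Ou04]}): since $(T_2(t))$ is the self-adjoint, positive, $X_\infty$-contractive semigroup associated with the form $\ea$, its $X_p$-realization is analytic on a sector of half-angle $\frac{\pi}{2}-\arctan\frac{|p-2|}{2\sqrt{p-1}}$ for every $p\in(1,\infty)$. This is the one step that is not a soft interpolation or duality argument: mere boundedness and positivity of the $T_p(t)$ is elementary, but pinning down the exact sector --- and, in particular, the fact that for $p\ne 2$ analyticity on $X_p$ is genuinely weaker than on $X_2$ --- rests essentially on this sharp result, so I expect it to be the main point requiring care.

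For compactness and $p$-independence of the spectrum, note that the form domain $V$ is a closed subspace of $(H^1(0,1))^m$, which embeds compactly into $X_2$ by the Rellich--Kondrachov theorem on the bounded interval $(0,1)$; hence the resolvent of $A$, mapping $X_2$ boundedly into $(V,\|\cdot\|_V)$, is compact, and since $(T_2(t))$ is analytic, hence immediately norm continuous, $T_2(t)$ is compact for every $t>0$. Compactness of $T_p(t)$ then follows from ultracontractivity through the factorization $T_p(3s)=T_p(s)\,T_2(s)\,T_p(s)$, whose outer factors are bounded as maps $X_p\to X_2$ and $X_2\to X_p$ (by the ultracontractive bounds, duality, and the finite-measure embeddings among the $X_q$) while the middle factor is compact on $X_2$; this is \cite[Thm.~1.6.4]{[Da89]}. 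Therefore each $A_p$ has compact resolvent, so $\sigma(A_p)$ consists only of eigenvalues. These are $p$-independent: if $A_pu=\lambda u$ with $p\ge 2$ then $u\in X_p\subseteq X_2$ and $A_2u=\lambda u$ by consistency, while if $A_2u=\lambda u$ then $T_2(s)u=\e^{\lambda s}u\in X_\infty$ by ultracontractivity, so that $u\in X_\infty\subseteq X_p$ is an eigenfunction of every $A_p$. Hence all the point spectra coincide, which is \cite[Thm.~2.1.5]{[Da89]}, and the proof is complete.
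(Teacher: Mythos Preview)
Your proposal is correct and follows precisely the paper's approach: the paper simply cites \cite[Thm.~1.4.1, Thm.~1.6.4, and Thm.~2.1.5]{[Da89]} and \cite[Thm.~3.13]{[Ou04]}, and you have spelled out how the hypotheses of those four results are met (sub-Markovianity from Lemma~\ref{bd}, ultracontractivity from Lemma~\ref{ultralemma}, self-adjointness from Proposition~\ref{generator}, and compactness of the embedding $V\hookrightarrow X_2$). The only remark is that your factorization for compactness on $X_p$ is written a little loosely --- by consistency the outer maps are really $X_p\hookrightarrow X_2$ and $T_2(s):X_2\to X_\infty\hookrightarrow X_p$ (or the dual picture for $p<2$) --- but this is exactly what \cite[Thm.~1.6.4]{[Da89]} formalizes, so the argument is sound.
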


The estimate on the analyticity angle of $(T_p(t))_{t\geq 0}$ is not sharp, cf.~\cite{[Mu05]} for details.

\begin{rem}\label{integral}
\emph{Consider the part $\tilde{A}$ of $A$ in
$\left(C[0,1]\right)^m$, whose domain is given by
\begin{equation*}
\begin{array}{rl}
&D(\tilde{A})=\left\{f\in \left(C^2(0,1)\right)^m: \Phi_w^+ f'(0)=\Phi_w^- f'(1)\hbox{ and }\right.\\[0.3em]
&\left.\qquad\qquad\;\;\;\exists d\in {\mathbb C}^n \hbox{ s.t. }
(\Phi^+)^\top d=f(0)\hbox{ and }(\Phi^-)^\top d=f(1) \right\}.
\end{array}
\end{equation*}
Define
$$C(G):=\left\{f\in \left(C[0,1]\right)^m:
\exists d\in {\mathbb C}^n \hbox{ s.t. } (\Phi^+)^\top d=f(0)\hbox{
and }(\Phi^-)^\top d=f(1) \right\},$$ which can be looked at as the
space of all continuous functions on the graph $G$. It is easy to
see that $\overline{D(\tilde{A})}=C(G)$. By Corollary~\ref{comp}
$\tilde{A}$ has positive resolvent, and it follows
by~\cite[Thm.~3.11.9]{[ABHN01]} that its part in $C(G)$ generates a
positive strongly continuous semigroup.}
\end{rem}

In the next lemma we show that the generators of the semigroups in
the spaces $X_p,\, 1\leq p \leq \infty$ (see Corollary \ref{comp})
have in fact the same form as in $X_2$, with appropriate domain.

\begin{lemma}\label{identp}
For all $p\in [1,\infty]$ the generator $A_p$ of the semigroup
$(T_p(t))_{t\geq 0}$ is given by the operator defined in~\eqref{amain} with
domain
\begin{eqnarray*}
&D(A_p)=\left\{f\in \prod _{j=1}^m W^{2,p}(0,1;\mu_j dx): \Phi_w^+ f'(0)=\Phi_w^- f'(1)\hbox{ and }\right.\\
&\left.\qquad\qquad\;\;\;\exists d\in {\mathbb C}^n \hbox{ s.t. } (\Phi^+)^\top d=f(0)\hbox{ and }(\Phi^-)^\top d=f(1)
\right\}.
\end{eqnarray*}
\end{lemma}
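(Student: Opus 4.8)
The plan is to show that the generator $A_p$ coincides with the operator in \eqref{amain} on the stated domain by a two-sided containment argument, using consistency of the semigroups $(T_p(t))_{t\ge 0}$ on the scale $X_p$ together with the already-established identification in $X_2$ (Lemma \ref{ident}). First I would fix $p\in[1,\infty]$ and set $A_2$ to be the operator $(A,D(A))$ of \eqref{amain}--\eqref{domamain}. The key structural fact, from Corollary \ref{comp}, is that all the $T_p(t)$ agree on $X_p\cap X_q$, so their resolvents are consistent on the common subspace; moreover $X_2$ is dense in $X_p$ for $p<\infty$ (and $X_\infty\subset X_p$ for all $p$), so one can transfer domain information from $p=2$ to general $p$.

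The concrete strategy is as follows. For $p\ge 2$ one has $X_p\hookrightarrow X_2$, and by consistency $R(\lambda,A_p)$ is the restriction of $R(\lambda,A_2)$ to $X_p$; hence $D(A_p)=R(\lambda,A_2)X_p=\{f\in D(A_2):A_2 f\in X_p\}$. Since $f\in D(A_2)$ already means $f\in(H^2(0,1))^m$ with $c_j f_j'\in H^1(0,1)$ and satisfies the two node conditions, and $A_2 f\in X_p$ forces $(c_jf_j')'\in L^p$, the standard one-dimensional bootstrap (writing $f_j'(x)=\frac{1}{c_j(x)}\bigl(c_j(0)f_j'(0)+\int_0^x(c_jf_j')'\bigr)$ and integrating, using $0<c_j\in C^1[0,1]\subset H^1$) yields $f_j\in W^{2,p}(0,1)$; the node conditions are unchanged. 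This gives $D(A_p)\subseteq$ the asserted set, and the reverse inclusion follows because any such $f$ lies in $D(A_2)$ with $A_2 f\in X_p$. For $1\le p<2$ one argues dually: $X_2$ is dense in $X_p$ and $A_p$ extends $A_2$ (consistency of resolvents on $X_2\subset X_p$), so $A_p\supseteq A_2$; to get equality one checks that $A_2$ viewed on $D(A_p)$ as defined is still a restriction of the generator — equivalently, that for $f$ in the asserted $W^{2,p}$-domain the integration-by-parts/Kirchhoff computation of Lemma \ref{ident} remains valid (all boundary terms make sense since $W^{2,p}(0,1)\hookrightarrow C^1[0,1]$), so $A_2 f$ computed pointwise agrees with $A_p f$. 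An alternative, cleaner route for $p<2$: show directly that $\lambda-A_{2}$ restricted to the $W^{2,p}$-domain is bijective onto $X_p$ for $\lambda$ large, by solving the resolvent equation $\lambda f - (c_jf_j')' = g_j$ edgewise via variation of parameters and matching the $n$ continuity plus $n$ Kirchhoff conditions — a finite linear system whose solvability is inherited from the $p=2$ case — and then invoke uniqueness of the generator.

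The main obstacle is the elliptic regularity / bootstrap step away from $p=2$: one must be careful that membership of $f$ in $D(A_p)$ obtained abstractly (as $R(\lambda,A_p)X_p$) genuinely implies the $W^{2,p}$-regularity and not merely something weaker, and that the variable coefficient $c_j$ — only assumed in $C^1[0,1]$ (equivalently $0<c_j\in H^1$) — does not spoil the gain of two derivatives in $L^p$. This is handled by the explicit one-dimensional representation above, which converts the problem into elementary facts about $W^{1,p}$ and multiplication by $c_j^{\pm1}\in C^1[0,1]$; the only genuinely \emph{delicate} endpoint is $p=\infty$, where $X_\infty\subset X_2$ makes the inclusion $D(A_\infty)\subseteq D(A_2)$ with $A_2 f\in X_\infty$ immediate, but one should note (as in Remark \ref{integral}) that $T_\infty$ need not be strongly continuous, so $D(A_\infty)$ is interpreted in the weak$^*$ sense; the characterization of the domain via the resolvent is nonetheless unaffected. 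The node conditions, being closed conditions stable under the $W^{2,p}\hookrightarrow C^1$ embedding, carry over without any extra work in all cases.
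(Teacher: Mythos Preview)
Your proposal is correct and follows essentially the same route as the paper: for $p\ge 2$ you identify $A_p$ with the part of $A_2$ in $X_p$ (the paper phrases this via invariance of $X_p$ under $(T_2(t))$ from ultracontractivity and \cite[Prop.~II.2.3]{[EN00]}, which amounts to your resolvent-consistency statement), then read off the $W^{2,p}$-domain by the one-dimensional bootstrap that the paper summarizes as ``a direct computation''; for $p<2$ the paper likewise invokes duality (referring to \cite[Lemma~4.11]{[Mu05]}), so your primary suggestion matches, with your variation-of-parameters alternative being an acceptable but unnecessary detour.
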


\begin{proof}
Let us prove the claim for $p>2$. We have already remarked that $X_p\hookrightarrow X_q$ for all $1\leq
q\leq p\leq\infty$. Moreover, it follows by the ultracontractivity of $(T_2(t))_{t\geq 0}$ (see Lemma
\ref{ultralemma}) that $X_p$ is invariant under $(T_2(t))_{t\geq 0}$ for all $p>2$ because if $f\in X_p$ then
$f\in X_2$, and by \eqref{ultra}, $$\Vert T_2(t)f\Vert_{X_p}\leq C\cdot \Vert T_2(t)f\Vert_{X_\infty} \leq
C\cdot M t^{-\frac{1}{4}}\Vert f\Vert_{X_2} \leq  C'\cdot M t^{-\frac{1}{4}}\Vert f\Vert_{X_p}.$$ Thus,
by~\cite[Prop.~II.2.3]{[EN00]} the generator of $(T_p(t))_{t\geq 0}$ is the part of $A$ in $X_p$. A direct
computations yields the claim.

For $1\leq p<2$ the claim can be proven by duality, mimicking the proof of~\cite[Lemma~4.11]{[Mu05]}.
\end{proof}

\begin{theo}
The first order problem~\eqref{netcp} is well-posed on $X_p$,
$p\in[1,\infty)$, as well as on $C(G)$, i.e., for all initial data
$\mathsf{f}\in X_p$ or $\mathsf{f}\in C(G)$ the
problem~\eqref{netcp} admits a unique mild solution that
continuously depends on the initial data.

Such a solution is bounded in the time as well as (if $p>1$) in the space variables.
If further ${\mathsf f}\in X_p$, $1<p<\infty$, and $c_j\in C^\infty[0,1]$, $j=1,\ldots,m$, then the
solution $u(t,\cdot)$ is of class $\left(C^\infty[0,1]\right)^m$
for all $t>0$, and in particular the problem is solved pointwise for $t>0$.
\end{theo}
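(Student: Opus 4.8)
The plan is to deduce everything from the semigroup-theoretic machinery already assembled. First I would recall from Corollary~\ref{comp} that $(T_p(t))_{t\ge 0}$ is a strongly continuous semigroup on $X_p$ for every $p\in[1,\infty)$, and from Remark~\ref{integral} that its part in $C(G)$ is likewise strongly continuous; hence in each of these spaces the abstract Cauchy problem $\dot u=A_p u$, $u(0)=\mathsf f$, is well-posed in the sense that $u(t):=T_p(t)\mathsf f$ is the unique mild solution and depends continuously on $\mathsf f$ (continuous dependence being immediate from the uniform boundedness $\Vert T_p(t)\Vert\le 1$). The only point requiring a word is the translation between the abstract problem and the concrete system~\eqref{netcp}: by Lemma~\ref{identp} the generator $A_p$ acts as the second-order differential operator~\eqref{amain} on functions satisfying the continuity and Kirchhoff conditions, so a solution of the abstract problem, when smooth enough, is exactly a solution of~\eqref{netcp}; this gives existence, and uniqueness follows because any solution of~\eqref{netcp} yields, componentwise, a classical (hence mild) solution of the abstract problem, which is unique.

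Next I would address boundedness in time. Since $(T_p(t))_{t\ge 0}$ is contractive on $X_p$ (Corollary~\ref{comp}), one has $\Vert u(t)\Vert_{X_p}=\Vert T_p(t)\mathsf f\Vert_{X_p}\le\Vert\mathsf f\Vert_{X_p}$ for all $t\ge 0$, and the same on $C(G)$; this is boundedness in the time variable. For boundedness in the space variable when $p>1$: by Corollary~\ref{comp} the semigroup is analytic on $X_p$, so $T_p(t)\mathsf f\in D(A_p^k)$ for every $k$ and every $t>0$; by Lemma~\ref{identp} the domain $D(A_p)$ embeds into $\prod_j W^{2,p}(0,1)$, and Sobolev embedding gives $W^{2,p}(0,1)\hookrightarrow C[0,1]$ for $p>1$, whence $u(t,\cdot)\in(C[0,1])^m$ and in particular is bounded on $[0,1]$ for each $t>0$. (Alternatively one can simply invoke ultracontractivity, Lemma~\ref{ultralemma}, to land in $X_\infty$ directly; either route works.)

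For the regularity statement I would bootstrap. Assume $c_j\in C^\infty[0,1]$ and $\mathsf f\in X_p$ with $1<p<\infty$. By analyticity, $u(t)=T_p(t)\mathsf f\in\bigcap_{k\ge 1}D(A_p^k)$ for $t>0$. Being in $D(A_p)$ forces each $c_j u_j'\in W^{1,p}(0,1)$, and since $c_j$ is smooth and bounded below this gives $u_j(t,\cdot)\in W^{2,p}(0,1)$; being in $D(A_p^2)$ then gives $(c_j u_j')'\in D(A_p)\subset\prod W^{2,p}$, which combined with smoothness of $c_j$ pushes $u_j$ into $W^{4,p}$, and iterating the argument through all powers of $A_p$ yields $u_j(t,\cdot)\in\bigcap_{k}W^{k,p}(0,1)=C^\infty[0,1]$ by Sobolev embedding. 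With the spatial derivatives now classical, the Kirchhoff and continuity conditions hold in the classical sense and $\dot u(t)=A_p u(t)$ holds in $X_p$ pointwise in $x$, so the problem is solved pointwise for $t>0$.

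The main obstacle, such as it is, is not analytic but bookkeeping: making the identification between the abstract solution $T_p(t)\mathsf f$ and an actual function-on-the-network solution of~\eqref{netcp} airtight, i.e. verifying that membership in the iterated domains of $A_p$ really does translate, via Lemma~\ref{identp} and Sobolev embedding, into the classical validity of the node conditions~$(\ref{netcp})$; once Lemma~\ref{identp} and Corollary~\ref{comp} are in hand this is routine, and no genuinely new estimate is needed.
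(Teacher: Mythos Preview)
Your proof is correct and follows essentially the same route as the paper: well-posedness from the strongly continuous semigroups of Corollary~\ref{comp} and Remark~\ref{integral}, boundedness from contractivity and ultracontractivity, and $C^\infty$-regularity from analyticity (which maps $X_p$ into $D(A_p^\infty)$) together with the inclusion $D(A_p^\infty)\subset (C^\infty[0,1])^m$. The paper is considerably terser—it simply invokes ultracontractivity/analyticity and asserts the $D(A_p^\infty)$ inclusion ``as in the proof of Theorem~\ref{wellp2}''—so your explicit Sobolev-bootstrap argument actually fills in detail that the paper leaves to the reader.
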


\begin{proof}
The well-posedness and boundedness results follow from the fact
that the operators $A_p$ generate ultracontractive analytic 
semigroups. If $c_j\in C^\infty[0,1]$, $j=1,\ldots,m$, then we can
show as in the proof of Theorem~\ref{wellp2} that
$D(A_p^\infty)\subset \left(C^\infty[0,1]\right)^m$ for all $p\in
[1,\infty]$. Since the semigroup $(T_p(t))_{t\geq 0}$ is analytic, $1<p<\infty$,
it maps $X_p$ into $D(A_p^\infty)$, and the claim follows.
\end{proof}

 \section{A characteristic equation}\label{secchareq}

Having proved that the Cauchy problem 
(\ref{netcp}) is well-posed, we want to study the qualitative behavior
of its solutions. To this end we investigate the spectrum of the
generator $\left(A,D(A)\right)$. Since by Corollary \ref{comp} the
spectra of all $A_p$ on $X_p$, $1\leq p\leq\infty$, coincide, it
suffices to study the operator $A=A_2$ on $X_2$. Hence, we are interested in the spectrum of the operator
\begin{equation}\label{aspec}
A:=\begin{pmatrix}
c_1\frac{d^2}{dx^2} & & 0\\
 & \ddots &\\
0 & & c_m\frac{d^2}{dx^2} \\
\end{pmatrix}
\end{equation}
with domain
\begin{equation}\label{domaspec}
\begin{array}{rl}
D(A):=&\left\{f\in \prod _{j=1}^m \left(H^2(0,1);\mu_j dx\right):\Phi_w^+ f'(0)=\Phi_w^- f'(1)\hbox{ and }\right.\\[0.3em]
&\left.\;\;\;\exists d\in {\mathbb C}^n \hbox{ s.t. } (\Phi^+)^\top d=f(0)\hbox{ and }(\Phi^-)^\top d=f(1)
\right\}.
\end{array}
\end{equation}

Recalling properties of $\left(A,D(A)\right)$ already yields some information on its spectrum.

\begin{lemma}\label{sA}
The spectrum of $\left(A,D(A)\right)$ lies on the negative real line
and consists of eigenvalues only. Moreover, $s(A)=0\in\sigma(A).$
\end{lemma}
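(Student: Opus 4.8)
The plan is to extract the location and the nature of $\sigma(A)$ from the properties of $A$ already established, and then to exhibit an explicit eigenfunction for the eigenvalue $0$.

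\emph{Step 1: $\sigma(A)$ is real and nonpositive.} By Proposition~\ref{generator} the operator $A$ is self-adjoint, hence $\sigma(A)\subseteq{\mathbb R}$; and since $A$ is dissipative and generates a contraction semigroup, $(0,\infty)\subseteq\rho(A)$. Therefore $\sigma(A)\subseteq(-\infty,0]$. Equivalently, one may invoke the spectral theorem for the self-adjoint $A$ together with the identity $(Af,f)_{X_2}=-\ea(f,f)\le 0$ for $f\in D(A)$.

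\emph{Step 2: $\sigma(A)$ consists of eigenvalues only.} By Corollary~\ref{comp} the semigroup $(T_2(t))_{t\ge 0}$ is compact, and by Proposition~\ref{generator} it is analytic, hence immediately norm continuous; a standard result of semigroup theory (e.g.~\cite[Thm.~II.4.29]{[EN00]}) then shows that $R(\lambda,A)$ is compact for every $\lambda\in\rho(A)$. Since $\rho(A)\ne\emptyset$ by Step~1, $A$ has compact resolvent, so $\sigma(A)$ is a discrete set of eigenvalues of finite algebraic multiplicity. (Alternatively: each diagonal block of $A$ is a regular Sturm--Liouville operator on $[0,1]$, the blocks being coupled only through finitely many node conditions, so for $\lambda\in\rho(A)$ the resolvent is an integral operator with bounded kernel, hence compact.)

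\emph{Step 3: $0$ is an eigenvalue, and conclusion.} I would take the constant vector $\mathbf{1}:=(1,\dots,1)^\top\in\big(H^2(0,1)\big)^m$ as eigenfunction. Since $\mathbf{1}'\equiv 0$, the Kirchhoff condition $\Phi^+_w\mathbf{1}'(0)=\Phi^-_w\mathbf{1}'(1)$ holds trivially; and with $d:=(1,\dots,1)^\top\in{\mathbb C}^n$ one has $(\Phi^+)^\top d=\mathbf{1}(0)$ and $(\Phi^-)^\top d=\mathbf{1}(1)$, because by definition of $\Phi^\pm$ every column of $\Phi^+$ and of $\Phi^-$ has exactly one nonzero entry, equal to $1$. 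Hence $\mathbf{1}\in D(A)$ and $A\mathbf{1}=0$, so $0$ is an eigenvalue of $A$. Combining the three steps, $\sigma(A)\subseteq(-\infty,0]$ and $0\in\sigma(A)$, whence $s(A)=0\in\sigma(A)$. No genuinely hard point arises here: the only steps requiring a little care are the passage ``compact analytic semigroup $\Rightarrow$ compact resolvent $\Rightarrow$ pure point spectrum'', for which one quotes semigroup theory, and the verification that the constant function meets the coupled node conditions defining $D(A)$.
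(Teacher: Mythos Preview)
Your proof is correct and follows essentially the same approach as the paper's: self-adjointness gives real spectrum, dissipativity (contraction semigroup) gives $\sigma(A)\subseteq(-\infty,0]$, the constant function $\mathbf{1}$ lies in $D(A)$ and yields $0\in\sigma(A)$, and compact resolvent forces pure point spectrum. The only minor variation is that the paper obtains compactness of the resolvent directly from the inclusion $D(A)\subset\big(H^2(0,1)\big)^m$ and the compact Sobolev embedding (citing~\cite[II.4.30.(4)]{[EN00]}), whereas your primary route goes through the compactness of the semigroup from Corollary~\ref{comp}; both are valid, and your alternative Sturm--Liouville remark is in fact closer to the paper's own argument.
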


\begin{proof}
First note that $1\in D(A)$ and $A1=0$, thus $A$ is not invertible and $0\in\sigma(A)$. Since
$A$ generates a contractive semigroup (cf.~Proposition
\ref{generator}), $s(A)=0$. It follows from~\cite[II.4.30.(4)]{[EN00]} that, since $D(A)$ is contained in
$\left(H^2(0,1);\mu_j dx\right)^m$, the resolvent of $A$ is
compact. Therefore the operator $A$ only has point spectrum.
Recall that by Proposition \ref{generator} the operator $A$ is
self-adjoint, hence all its eigenvalues are real.
\end{proof}

From now on we
will assume that all the weights $c_i$, $i=1,\dots,m$, are
constant. Our aim is to find a `characteristic equation' for the
spectrum of $A$. In particular, we will be able to connect the
eigenvalues of the operator $A$ to the eigenvalues of the
\emph{Laplacian} or \emph{admittance matrix} of the corresponding
graph. This is the $n\times n$ matrix
\begin{equation}\label{graphlaplace}
\mathcal{L}:=D -\mathbb{A},
\end{equation}
where $\mathbb{A}$ is the standard $0-1$ adjacency matrix of the
graph and $D$ the diagonal matrix of vertex degrees. It is well
known that its spectrum reveals many properties of the graph, hence it is
used in many applications (see e.g.~\cite{[Ch97],[Me94],[Mo97]}).

We further define the \emph{generalized weighted adjacency matrix}
of the graph $G$ in the case $0< \lambda \neq c_j l^2\pi^2,\,
j=1,\dots, m, \, l\in \mathbb{Z}$, as
$$(\mathbb{A}_{C}(\lambda))_{ik}:=\left\{\begin{array}{ll}
0, & \text{ if } \Gamma(\mv _i)\cap \Gamma(\mv _k)=\emptyset,\\
\frac{\mu_j}{\sqrt{c_j}}\sin^{-1}\sqrt{\frac{\lambda}{c_j}},&\text{ if } j\in \Gamma(\mv _i)\cap \Gamma(\mv _k).
\end{array}
\right.
$$
By $D_{C}(\lambda)$ we denote the $n\times n$ diagonal matrix
(again for $0< \lambda \neq c_j l^2\pi^2,\, j=1,\dots, m, \, l\in
\mathbb{Z}$) defined as
$$D_{C}(\lambda):\mathrm{diag}\left(\sum_{j\in
\Gamma(\mv_i)}\frac{\mu_j}{\sqrt{c_j}}\cot\sqrt{\frac{\lambda}{c_j}}\right)_{i=1,\dots,n}.
$$ Finally, we define the \emph{generalized weighted
Laplacian matrix} as
$$\mathcal{L}_{C}(\lambda):=D_{C}\left(\lambda \right)-\mathbb{A}_{C}(\lambda).$$
We will now express the above matrices using the weighted incidence matrices.
For this purpose we define diagonal matrices
\begin{eqnarray*}
\Sin x&:=&\mathrm{diag}\left(\sin{\frac{x}{\sqrt{c_1}}},\dots,\sin{\frac{x}{\sqrt{c_m}}}\right),\\
\Cos x&:=&\mathrm{diag}\left(\cos{\frac{x}{\sqrt{c_1}}},\dots,\cos{\frac{x}{\sqrt{c_m}}}\right),\\
\Cot x &:=& \Sin^{-1}x\cdot \Cos x,\quad {\rm and}\\
C&:=&\mathrm{diag}(1/\sqrt{c_1},\dots,1/\sqrt{c_m}).
\end{eqnarray*}

\begin{lemma}\label{ad}
For $ 0< \lambda\ne c_rl^2\pi^2$, $r=1,\dots,m,\, l\in \mathbb{Z}$
we have
\begin{eqnarray*}
\mathbb{A}_{C}(\lambda)&=&
\Phi_w^+\cdot C\cdot\Sin^{-1}\sqrt{\lambda}\cdot(\Phi^-)^\top +
\Phi_w^-\cdot C\cdot\Sin^{-1}\sqrt{\lambda}\cdot(\Phi^+)^\top\quad {\rm and}\\
D_{C}(\lambda)&=&\Phi_w^+\cdot C\cdot\Cot\sqrt{\lambda}\cdot\left(\Phi^+\right)^\top + \Phi_w^-\cdot C\cdot\Cot\sqrt{\lambda}\cdot(\Phi^-)^\top.\\
\end{eqnarray*}\end{lemma}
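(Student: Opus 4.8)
The plan is to verify the two identities entrywise, comparing the $(i,k)$ entry of each side. First I would recall that for matrices of the form $\Phi_w^\pm \cdot M \cdot (\Phi^\pm)^\top$ with $M=\mathrm{diag}(m_1,\dots,m_m)$ diagonal, the $(i,k)$ entry is $\sum_{j=1}^m \omega^\pm_{ij}\, m_j\, \phi^\pm_{kj}$, which by definition of the weighted incidence matrices picks out exactly those edges $j$ that touch \emph{both} $\mv_i$ and $\mv_k$ at the prescribed endpoints, weighted by $\mu_j c_j(\mv_i)\, m_j$. Since the $c_j$ are now assumed constant, $c_j(\mv_i)=c_j$ and the factor $C=\mathrm{diag}(1/\sqrt{c_j})$ contributes $1/\sqrt{c_j}$, so $\mu_j c_j \cdot \tfrac{1}{\sqrt{c_j}} = \mu_j\sqrt{c_j}$ — and one must be a little careful here since the claimed answer has $\mu_j/\sqrt{c_j}$, not $\mu_j\sqrt{c_j}$; I expect the resolution is that $\omega^\pm_{ij}=\mu_j c_j(\mv_i)$ but in the product the relevant combination together with $\Sin^{-1}$ or $\Cot$ evaluated at $\sqrt{\lambda}/\sqrt{c_j}$ produces precisely $\tfrac{\mu_j}{\sqrt{c_j}}\sin^{-1}\sqrt{\lambda/c_j}$ after the bookkeeping, and pinning down this constant correctly is the one genuinely delicate point.

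For the adjacency identity, I would take $M = C\cdot\Sin^{-1}\sqrt{\lambda} = \mathrm{diag}\big(\tfrac{1}{\sqrt{c_j}}\sin^{-1}\sqrt{\lambda/c_j}\big)$ and observe that the two summands $\Phi_w^+ M (\Phi^-)^\top$ and $\Phi_w^- M (\Phi^+)^\top$ together account for all ways an edge $j$ can have one endpoint at $\mv_i$ and the other at $\mv_k$ (the first term handles $\me_j(0)=\mv_i,\,\me_j(1)=\mv_k$, the second the reverse). When $\Gamma(\mv_i)\cap\Gamma(\mv_k)=\emptyset$ there is no such edge, giving the zero entry; when there is an edge $j\in\Gamma(\mv_i)\cap\Gamma(\mv_k)$ (at most one, since $G$ is simple with no multiple edges), we get the single term $\tfrac{\mu_j}{\sqrt{c_j}}\sin^{-1}\sqrt{\lambda/c_j}$. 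Note that $i=k$ gives the zero diagonal (no loops), matching the definition of $\mathbb{A}_C(\lambda)$.

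For the degree identity, I would instead take $M = C\cdot\Cot\sqrt{\lambda} = \mathrm{diag}\big(\tfrac{1}{\sqrt{c_j}}\cot\sqrt{\lambda/c_j}\big)$ and look at $\Phi_w^+ M (\Phi^+)^\top + \Phi_w^- M (\Phi^-)^\top$. Now the $(i,k)$ entry involves edges $j$ with $\me_j(0)=\mv_i=\mv_k$ (from the first term) or $\me_j(1)=\mv_i=\mv_k$ (from the second), so the whole expression is diagonal; its $(i,i)$ entry sums $\tfrac{\mu_j}{\sqrt{c_j}}\cot\sqrt{\lambda/c_j}$ over $j\in\Gamma(\mv_i)$, which is exactly $D_C(\lambda)$. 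Both computations only use the defining formulas for $\Phi^\pm$, $\Phi_w^\pm$, $\Gamma(\mv_i)$, the constancy of the $c_j$, and the simplicity of $G$; the only thing requiring real attention is tracking the $\sqrt{c_j}$ factors so that $\mu_j c_j(\mv_i)$ from $\Phi_w^\pm$ combines with the $\mathrm{diag}(1/\sqrt{c_j})$ and the argument rescaling inside $\Sin$/$\Cos$ to yield the stated coefficients, so I would carry out that one entrywise check carefully and leave the rest as routine verification.
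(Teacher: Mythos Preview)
Your entrywise verification is exactly the right approach, and since the paper states this lemma without proof, it is presumably what the authors had in mind as well. The casework you outline (which edges contribute to which summand, why the adjacency expression is zero on the diagonal, why the degree expression is diagonal) is correct.

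However, the one point you flag as ``genuinely delicate'' does not resolve the way you hope. Carrying out the bookkeeping honestly: for an edge $j$ with $\me_j(0)=\mv_i$ and $\me_j(1)=\mv_k$, the $(i,k)$ entry of $\Phi_w^+\, C\,\Sin^{-1}\sqrt{\lambda}\,(\Phi^-)^\top$ is
\[
\omega^+_{ij}\cdot\frac{1}{\sqrt{c_j}}\cdot\frac{1}{\sin\sqrt{\lambda/c_j}}
=\mu_j c_j\cdot\frac{1}{\sqrt{c_j}}\cdot\frac{1}{\sin\sqrt{\lambda/c_j}}
=\frac{\mu_j\sqrt{c_j}}{\sin\sqrt{\lambda/c_j}},
\]
not $\dfrac{\mu_j}{\sqrt{c_j}\,\sin\sqrt{\lambda/c_j}}$ as in the paper's definition of $\mathbb{A}_C(\lambda)$. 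The same factor-of-$c_j$ mismatch occurs in the $D_C(\lambda)$ computation. This is a genuine inconsistency in the paper between the definitions of $\mathbb{A}_C(\lambda)$, $D_C(\lambda)$ and the lemma; it is harmless downstream because the characteristic-equation theorem that uses it works directly with the right-hand sides of the lemma (which are the quantities that actually arise from the Kirchhoff condition), and the explicit spectral results afterwards specialize to $c_j=1$, where the discrepancy vanishes. So do the computation, record the $\mu_j\sqrt{c_j}$ coefficient you actually get, and note the mismatch with the stated definitions rather than asserting that it works out.
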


We are now able to describe the spectrum of our operator $A$ in
terms of spectral values of $\mathcal{L}_{C}(\lambda)$. Similar
results have already been obtained in much the same way as
in~\cite{[Be85]},~\cite{[Ni85]},~\cite{[Ni87]},~\cite{[Ni87b]},~\cite{[Be88b]},
and~\cite{[Ca97]} for the cases $\mu_j=1$ and/or $c_j=1$.

\begin{theo}\label{spectrum1}
For the spectrum of the operator $(A,D(A))$, defined in \eqref{aspec}--\eqref{domaspec}, we obtain
$$\sigma(A)=\{0\}\cup\sigma_C\cup\sigma_{\cal L},$$
where
\begin{eqnarray*}
 \sigma_C &\subseteq &\left\{-c_ik^2\pi^2:  k\in\mathbb{Z}\setminus\{0\}, i=1,\dots,m\right\}\quad {\rm and} \\
  \sigma_{\cal L}&=& \left\{-\lambda\in\mathbb{R}_{-} : \lambda \neq c_i k^2 \pi^2,\,  \det\mathcal{L}_{C}(\lambda)=0\right\}.
\end{eqnarray*}
Furthermore,
\begin{enumerate}
\item $\lambda=0\in\sigma(A)$ is always an eigenvalue of
(geometric and algebraic) multiplicity $1$ with an eigenvector
$f(x)\equiv\mathbf{1}$, the constant $1$ function.
\item $-\lambda\in\sigma_{\cal L}$ is an eigenvalue of $A$ with
corresponding eigenvector
\begin{equation*}
f(x)=\Cos\sqrt{\lambda }x \cdot\left(\Phi^{+}\right)^{\top }\!\!d +
 \Sin^{-1}\sqrt{\lambda}\cdot \Sin\sqrt{\lambda}x\cdot\left(\left(\Phi^{-}\right)^{\top}\!\!  -
\Cos\sqrt{\lambda }\cdot\left(\Phi^+\right)^{\top}\right)d
\end{equation*}
where $d\in\ker\mathcal{L}_{C}(\lambda)$, and so the multiplicity $m(-\lambda)$ of this eigenvalue is equal to
$\dim\ker\mathcal{L}_{C}(\lambda)$;
\item $-c_ik^2\pi^2\in\sigma_C$ is an eigenvalue of $A$ if and only if
there exist $b\in\mathbb{C}^m$ and $d\in\mathbb{C}^n$ such that
whenever $j\in\Gamma(\mv_r)\cap\Gamma(\mv_s)$, $j\in\{1,\dots,m\}$,
we have
\begin{equation}\label{firstcond}
\left\{\begin{array}{ll}d_r=(-1)^{\sqrt{\frac{c_i}{c_j}}k}d_s,
&\text{ if
}\sqrt{\frac{c_i}{c_j}}k\in\mathbb{Z},\\
b_j = \sin^{-1}\sqrt{\frac{c_i}{c_j}}k\pi\cdot d_r
-\cot^{-1}\sqrt{\frac{c_i}{c_j}}k\pi\cdot d_s, &\text{ otherwise.}
\end{array}\right.
\end{equation}
These vectors further satisfy the equation
\begin{equation}\label{othercond}
\Phi_w^-\cdot C \cdot
\Sin\sqrt{c_i}k\pi\cdot(\Phi^+)^{\top}d=(\Phi_w^-\cdot C \cdot
\Cos\sqrt{c_i}k\pi-\Phi_w^+ \cdot C)\cdot b.
\end{equation}
If the eigenvector exists, then it has the form
\begin{equation*}
f(x)=\Cos\sqrt{c_i}k\pi x\cdot(\Phi^+)^\top d + \Sin\sqrt{c_i}k\pi
x\cdot b.
\end{equation*}
\end{enumerate}
\end{theo}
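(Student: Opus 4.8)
The plan is to solve the eigenvalue equation $Af = \lambda f$ directly on each edge and then encode the vertex conditions as a linear system. First I would fix $\lambda \in \mathbb{C}$ and note that, since $A$ acts edgewise as $c_j \frac{d^2}{dx^2}$, on the edge $\me_j$ the equation $c_j f_j'' = \lambda f_j$ has the general solution $f_j(x) = a_j \cos\sqrt{\lambda/c_j}\,x + b_j \sin\sqrt{\lambda/c_j}\,x$ (with the obvious modification $f_j(x) = a_j + b_j x$ when $\lambda = 0$). Thus in vector form, using the diagonal matrices $\Cos,\Sin,C$, one has $f(x) = \Cos\sqrt{\lambda}x\cdot a + \Sin\sqrt{\lambda}x\cdot b$ for some $a,b\in\mathbb{C}^m$, and $f'(x) = -C\,\Sin\sqrt{\lambda}x\cdot a + C\,\Cos\sqrt{\lambda}x\cdot b$ (up to normalization of the $C$ factors — this bookkeeping is routine but must be done carefully). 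The eigenvalue $0$ is handled separately and immediately: $f\equiv\mathbf{1}$ is an eigenvector by Lemma~\ref{sA}, and simplicity of $0$ follows from connectedness of $G$ together with self-adjointness, since a harmonic (piecewise affine, continuous, Kirchhoff) function on a finite connected network is constant; algebraic multiplicity one is then automatic by self-adjointness.

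Next, for $\lambda\neq 0$, I would substitute the general edgewise solution into the two vertex conditions. Continuity at the vertices, expressed as $(\Phi^+)^\top d = f(0)$ and $(\Phi^-)^\top d = f(1)$ for some $d\in\mathbb{C}^n$, reads $(\Phi^+)^\top d = a$ and $(\Phi^-)^\top d = \Cos\sqrt{\lambda}\cdot a + \Sin\sqrt{\lambda}\cdot b$. When $\lambda \neq c_i k^2\pi^2$ for all $i,k$, the matrix $\Sin\sqrt{\lambda}$ is invertible, so the first relation gives $a = (\Phi^+)^\top d$ and the second lets us solve for $b = \Sin^{-1}\sqrt{\lambda}\big((\Phi^-)^\top - \Cos\sqrt{\lambda}\,(\Phi^+)^\top\big)d$. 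This already yields the claimed form of the eigenvector in part~(2). It remains to impose the Kirchhoff condition $\Phi_w^+ f'(0) = \Phi_w^- f'(1)$; substituting $f'(0) = C\,b$ and $f'(1) = -C\,\Sin\sqrt{\lambda}\cdot a + C\,\Cos\sqrt{\lambda}\cdot b$ and then inserting the expressions for $a,b$ in terms of $d$, one obtains after simplification a homogeneous linear system $\mathcal{L}_C(\lambda)\,d = 0$; here Lemma~\ref{ad} is exactly the identity that repackages the resulting combination of $\Phi_w^\pm, C, \Sin^{-1}\sqrt{\lambda}, \Cot\sqrt{\lambda}, (\Phi^\pm)^\top$ into $D_C(\lambda) - \mathbb{A}_C(\lambda)$. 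Hence $-\lambda$ is an eigenvalue iff $\det\mathcal{L}_C(\lambda) = 0$, and the eigenspace is isomorphic (via $d\mapsto f$) to $\ker\mathcal{L}_C(\lambda)$, giving the multiplicity statement.

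The remaining case is $\lambda = c_i k^2\pi^2$ for some $i,k$, which is precisely where $\Sin\sqrt{\lambda}$ becomes singular and the above inversion breaks down — this is the technical heart of the proof. Here I would work component-wise: on an edge $\me_j$ with $\sqrt{c_i/c_j}\,k\in\mathbb{Z}$, one has $\sin\sqrt{\lambda/c_j} = 0$ and $\cos\sqrt{\lambda/c_j} = (-1)^{\sqrt{c_i/c_j}k}$, so continuity forces the compatibility relation $d_r = (-1)^{\sqrt{c_i/c_j}k} d_s$ between the two endpoint values (first line of~\eqref{firstcond}) and leaves $b_j$ free; on an edge where $\sqrt{c_i/c_j}\,k\notin\mathbb{Z}$, $\sin\sqrt{\lambda/c_j}\neq 0$ and we can again solve for $b_j$ in terms of $d_r,d_s$ (second line of~\eqref{firstcond}). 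Feeding these into the Kirchhoff condition at the remaining vertices produces the constraint~\eqref{othercond}, and existence of the eigenvector is equivalent to solvability of this combined linear system in $(b,d)$. The form $f(x) = \Cos\sqrt{c_i}k\pi x\cdot(\Phi^+)^\top d + \Sin\sqrt{c_i}k\pi x\cdot b$ is then read off. Finally I would observe that $\sigma(A)\subseteq\mathbb{R}_{\le 0}$ consists only of eigenvalues by Lemma~\ref{sA}, so every $\lambda<0$ lies in one of the two families according to whether it equals some $c_i k^2\pi^2$, which gives the disjoint union $\sigma(A) = \{0\}\cup\sigma_C\cup\sigma_{\mathcal L}$. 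The main obstacle, as noted, is the careful case analysis and linear-algebra bookkeeping at the resonant values $\lambda = c_i k^2\pi^2$, especially handling edges of mixed type (some with $\sqrt{c_i/c_j}k$ integer, some not) incident to the same vertex.
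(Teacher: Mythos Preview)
Your proposal is correct and follows essentially the same three-case structure as the paper's proof: generic $\lambda$ (invert $\Sin\sqrt{\lambda}$ and reduce to $\mathcal{L}_C(\lambda)d=0$ via Lemma~\ref{ad}), $\lambda=0$ (connectedness forces simplicity), and the resonant case $\lambda=c_ik^2\pi^2$ (componentwise split according to whether $\sqrt{c_i/c_j}\,k\in\mathbb{Z}$). The only cosmetic difference is that for $\lambda=0$ the paper explicitly parametrizes the affine eigenfunctions as $f(x)=(\Phi^+)^\top d - x\,\Phi^\top d$ with $d\in\ker\Phi_w\Phi^\top$ and then invokes a graph-theoretic reference for $\dim\ker\Phi_w\Phi^\top=1$, whereas you phrase the same fact as ``a piecewise-affine harmonic function on a connected network is constant''; these are equivalent.
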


\begin{proof}
By Lemma \ref{sA}, we need to solve the equation
$$Af=-\lambda f\quad{\rm for}\quad
f\in D(A)\quad {\rm and}\quad\lambda \ge 0.$$

We will distinguish three cases.

\noindent{\bf Case 1:} Assume that $\lambda\ne c_i k^2\pi^2$ for all $k\in\mathbb{Z}, i=1,\dots,m$.\\
In this case the eigenfunctions of $A$ are of the form
$$f(x)=\Cos\sqrt{\lambda}x\cdot a + \Sin\sqrt{\lambda}x\cdot b \quad \hbox{\rm for some}\quad a,b\in \mathbb{C}^{m}.$$
From the continuity assumption in the domain of $A$ (see
(\ref{domaspec}))
$$\exists d\in {\mathbb C}^n\hbox{ s.t. } (\Phi^+)^\top\!\!d=f(0)\hbox{ and }(\Phi^-)^\top\!\! d=f(1)$$
we obtain
$$
f\left( x\right) =\Cos\sqrt{\lambda }x \cdot
\left(\Phi^{+}\right)^{\top }\!\!d +
 \Sin^{-1}\sqrt{\lambda}\cdot \Sin\sqrt{\lambda}x\cdot\left(\left(\Phi^{-}\right)^{\top}\!\!  -
\Cos\sqrt{\lambda }\cdot\left(\Phi^+\right)^{\top}\right)d
$$
for some $d\in \mathbb{C}^{n}$. The other condition $\Phi_w^+
f'(0)=\Phi_w^- f'(1)$ in the domain $D(A)$ (i.e.~the Kirchhoff
law) yields that $f\in \ker \left( \lambda -A\right) $ if and only
if the vector $d\in \mathbb{C}^{n}$ satisfies
\begin{eqnarray*}
&&\Phi _{w}^+\cdot C\cdot\Sin^{-1}\sqrt{\lambda} \cdot
\left(\left(\Phi^{-}\right)^{\top} - \Cos\sqrt{\lambda }\cdot\left(\Phi^+\right)^{\top}\right)d=\\
&&=\Phi_{w}^-\cdot C\cdot\left(\Cot\sqrt{\lambda}\cdot
\left((\Phi^{-})^{\top }-\Cos\sqrt{\lambda}\cdot\left(\Phi^+\right)^{\top}\right)-\Sin\sqrt{\lambda}\cdot
(\Phi^+)^{\top}\right)d.
\end{eqnarray*}
Observe now that, by Lemma \ref{ad}, the following two terms are
the previously defined diagonal matrix
$$\Phi_w^+\cdot C\cdot\Cot\sqrt{\lambda}\cdot\left(\Phi^+\right)^\top + \Phi_w^-\cdot C\cdot\Cot\sqrt{\lambda}\cdot(\Phi^-)^\top=D_{C}(\lambda),
$$
while rearranging the remaining terms yields the weighted adjacency
matrix
\begin{eqnarray*}
 && \Phi_w^+\cdot C\cdot\Sin^{-1}\sqrt{\lambda}\cdot(\Phi^-)^\top +
\Phi_w^-\cdot C\cdot\left(\Sin^{-1}\sqrt{\lambda}\cdot\Cos^2\sqrt{\lambda} + \Sin\sqrt{\lambda }\right)\cdot(\Phi^+)^\top = \\
  && =\Phi_w^+\cdot C\cdot\Sin^{-1}\sqrt{\lambda}\cdot(\Phi^-)^\top +
\Phi_w^-\cdot C\cdot\Sin^{-1}\sqrt{\lambda}\cdot(\Phi^+)^\top =\mathbb{A}_{C}(\lambda).
\end{eqnarray*}
Summing up, the condition for $d\in \mathbb{C}^{n}$ becomes
$$\left(\mathbb{A}_{C}(\lambda)-D_{C}(\lambda)\right)d=0,\quad \hbox{\rm that is}\quad d\in\ker\mathcal{L}_{C}(\lambda).$$

\smallskip
\noindent{\bf Case 2:} $\lambda=0$.\\
The eigenfunctions of $A$, corresponding to $\lambda=0$, are of the form
$$f(x)=x\cdot a + b \quad \hbox{\rm for some}\quad a,b\in \mathbb{C}^{m}.$$
We repeat the above procedure and the conditions in the domain (\ref{domamain}) yield
$$
f(x)=(\Phi^+)^\top d - x\cdot\Phi^\top d\quad {\rm for}\quad
d\in\ker\Phi_w\Phi^\top\quad {\rm
with}\quad\Phi_w=\Phi_w^{+}-\Phi_w^{-}.$$
Since our graph is
connected, the multiplicity of $0$ in $\sigma(\Phi_w\Phi^\top)$ is 1
(cf.~\cite[Lemma 1.7.(iv)]{[Ch97]} or~\cite[Prop.~2.3]{[Mo97]})). It
is easy to see that the corresponding eigenvector equals
$d=\mathbf{1}:=(1,\dots,1)^\top$. Now compute
$$f(x)=(\Phi^+)^\top \mathbf{1} - x\cdot\Phi^\top \mathbf{1} \equiv \mathbf{1}\text{ for all }x.$$

\smallskip
\noindent{\bf Case 3:} $\lambda =c_ik^2\pi^2$ for some $k\in\mathbb{Z}\setminus\{0\}$ and some $i\in\{1,\dots,m\}$. \\
We proceed as before while some care need to be taken with zero
entries that arise. Before applying the inverse of $\Sin \sqrt{\lambda}$, the continuity
condition in (\ref{domaspec}) implies
$$f(x)=\Cos\sqrt{c_i}k\pi x\cdot(\Phi^+)^\top d + \Sin\sqrt{c_i}k\pi x\cdot b,$$
where $b$ satisfies the equation
\begin{equation}\label{b}
\Sin\sqrt{c_i}k\pi\cdot b = \left((\Phi^-)^\top -
\Cos\sqrt{c_i}k\pi\cdot(\Phi^+)^\top\right) d.
\end{equation}
Since the $i$-th entry on the left-hand side equals $0$, the
vector $d$ should satisfy the condition
$$d_p=(-1)^kd_q\quad {\rm for}\quad i\in\Gamma(\mv_p)\cap\Gamma(\mv_q).$$
Moreover, if for any other $j\in\{1,\dots,m\}$ we have $\sqrt{\frac{c_i}{c_j}}k\in\mathbb{Z}$, then also
$$d_r=(-1)^{\sqrt{\frac{c_i}{c_j}}k}d_s\quad {\rm for}\quad j\in\Gamma(\mv_r)\cap\Gamma(\mv_s).$$
For each of these cases we have no conditions on $b_j$. If on the
other hand $\sqrt{\frac{c_i}{c_j}}k\notin\mathbb{Z}$, equation
(\ref{b}) yields
$$b_j = \sin^{-1}\sqrt{\frac{c_i}{c_j}}k\pi\cdot d_r
-\cot^{-1}\sqrt{\frac{c_i}{c_j}}k\pi\cdot d_s,\quad
j\in\Gamma(\mv_r)\cap\Gamma(\mv_s).$$ Furthermore, the condition
$\Phi_w^+ f'(0)=\Phi_w^- f'(1)$ in the domain $D(A)$ (i.e.~the
Kirchhoff law) implies that above vectors $d$  and $b$ have to
satisfy the equation (\ref{othercond}).
\end{proof}

Let us emphasize that the condition 3 in the above theorem in not
always satisfied, 
therefore the spectrum of our operator $A$ strongly relies on the
underlying graph and on the weights $c_j$.

From now on we assume $c_j=1,\, j=1,\dots,m$. In this case we are
able to connect the spectrum of the operator $A$ to the spectrum of
yet another matrix known in graph theory. The \emph{transition
matrix} is defined as
$$\mathbb{P}:=D^{-1}\mathbb{A}$$
and is studied in connections with random walks on graphs (see
e.g.~\cite[Sec.~5.2]{[Mo97]} or~\cite[Sec.~1.5]{[Ch97]}). The matrix
$\mathbb{P}$ is always positive, symmetric, row stochastic matrix
with eigenvalues
$$\sigma \left( \mathbb{P}\right)  =\left\{ \alpha_{1},\dots,\alpha_{n}\right\} \text{ where }-1 \leq \alpha_{n}\leq \dots\leq \alpha _{2}<\alpha_{1}=1.$$
By~\cite[Claim 5.3]{[Mo97]}, $1$ is a simple eigenvalue whenever $G$
is connected (what we assumed at the beginning) and $-1$ is an
eigenvalue if and only if $G$ is bipartite (see also Lemma 1 in
Section $5$ of~\cite{[Be85]}). It turns out that an important subset
of the spectrum depends on the fact whether the graph $G$ is
\emph{bipartite} or not. This property means that the set of
vertices can be divided into two disjoint subsets $V_1$ and $V_2$
such that any edge of $G$ has one endpoint in one and the other
endpoint in the other subset. Note that $G$ is bipartite if and only
if it does not have any odd cycle.

The following characteristic equation has already been proved by von
Below~\cite{[Be85]}. We state and sketch the proof in our context
for the convenience of the reader.

\begin{theo}\label{spectrum2}
Let $(A,D(A))$ be the operator defined in
\eqref{aspec}--\eqref{domaspec}, with  $c_j=1,\, j=1,\dots,m.$
Then for the spectrum of $A$ we have
$$\sigma(A)=\{0\} \cup \sigma_{p}\cup\sigma_k,$$
where
\begin{equation*}
\sigma_p =\left\{-\left( 2l\pi \pm \mathrm{arc}\cos \alpha
\right)^{2}: \alpha\in\sigma(\mathbb{P})\setminus \{-1,1\} \text{
and }l\in\mathbb{Z} \right\} ,
\end{equation*}
and
\begin{equation*}
\sigma_k= \{-k^2\pi^2: k\in\mathbb{Z}\setminus \{0\}\}
\end{equation*}
For the multiplicities of the eigenvalues we have:
\begin{enumerate}
\item $m(0)=1$; \item $m(-\lambda)=\dim \ker \left( \cos\sqrt{\lambda }\cdot I-\mathbb{P}\right) $ for $-\lambda
\in \sigma_p$; \item $m(-k^2\pi^2)=m-n+2$, if $G$  is bipartite; \item $m(-4l^2\pi^2)=m-n+2$ and
$m(-(2l+1)^2\pi^2)=m-n$, if $G$ is not bipartite.
\end{enumerate}
\end{theo}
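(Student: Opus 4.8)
The plan is to specialize Theorem~\ref{spectrum1} to the case $c_j=1$ for all $j$ and identify the matrices $\mathbb{A}_C(\lambda)$, $D_C(\lambda)$, $\mathcal{L}_C(\lambda)$ with the classical graph matrices. First I would set $c_j=1$ in the definitions preceding Lemma~\ref{ad}: then $\Sin x = (\sin x) I$, $\Cos x = (\cos x) I$, $C=I$, and (assuming the $\mu_j$ are likewise normalized, or more generally absorbing them) the generalized weighted adjacency and degree matrices collapse to $\mathbb{A}_C(\lambda) = \frac{1}{\sin\sqrt\lambda}\,\mathbb{A}$ and $D_C(\lambda) = \frac{\cos\sqrt\lambda}{\sin\sqrt\lambda}\,D$, valid whenever $\sqrt\lambda \notin \pi\mathbb{Z}$, i.e. $\lambda \neq k^2\pi^2$. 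Hence $\mathcal{L}_C(\lambda) = \frac{1}{\sin\sqrt\lambda}\big(\cos\sqrt\lambda\cdot D - \mathbb{A}\big)$, and since $\sin\sqrt\lambda\neq 0$ in this case, $\det\mathcal{L}_C(\lambda)=0$ is equivalent to $\det(\cos\sqrt\lambda\cdot D - \mathbb{A})=0$. Factoring out the (invertible, since all degrees are $\geq 2$) matrix $D$, this is $\det(\cos\sqrt\lambda\cdot I - D^{-1}\mathbb{A})=\det(\cos\sqrt\lambda\cdot I - \mathbb{P})=0$, i.e. $\cos\sqrt\lambda \in \sigma(\mathbb{P})$.

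Next I would translate the condition $\cos\sqrt\lambda = \alpha$ into the explicit form of $\sigma_p$. For $\alpha\in\sigma(\mathbb{P})\setminus\{-1,1\}$ we have $\alpha\in(-1,1)$, so $\sqrt\lambda = 2l\pi \pm \arccos\alpha$ for some $l\in\mathbb{Z}$, and squaring gives exactly the exponents appearing in the definition of $\sigma_p$; one must check that the excluded values $\alpha=\pm1$ correspond precisely to $\lambda=0$ (the eigenvalue $\alpha=1$, simple by connectedness) and to $\lambda\in\sigma_k$ in the bipartite case ($\alpha=-1$, giving $\sqrt\lambda = (2l+1)\pi$), so that they are accounted for elsewhere in the decomposition. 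The set $\sigma_C$ from Theorem~\ref{spectrum1} becomes, after setting $c_i=1$, contained in $\{-k^2\pi^2 : k\in\mathbb{Z}\setminus\{0\}\}$, which is $\sigma_k$; I would then show the reverse inclusion, that every $-k^2\pi^2$ is genuinely an eigenvalue, by exhibiting eigenfunctions — when $c_i=c_j=1$ the integrality condition $\sqrt{c_i/c_j}\,k\in\mathbb{Z}$ is automatic, so condition~3 of Theorem~\ref{spectrum1} simplifies drastically: one needs $d_r=(-1)^k d_s$ on every edge, and then \eqref{othercond} reduces (using $\Sin\sqrt{c_i}k\pi = 0$ and $\Cos k\pi = (-1)^k I$) to a solvability condition on $b$.

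For the multiplicities, items~1 and~2 are immediate from Theorem~\ref{spectrum1}: $m(0)=\dim\ker\mathcal{L}_C$ restricted appropriately is $1$, and $m(-\lambda)=\dim\ker\mathcal{L}_C(\lambda) = \dim\ker(\cos\sqrt\lambda\cdot D-\mathbb{A}) = \dim\ker(\cos\sqrt\lambda\cdot I - \mathbb{P})$ for $-\lambda\in\sigma_p$, the last equality because $D$ is invertible and $\mathbb{P}=D^{-1}\mathbb{A}$ is similar (via $D^{1/2}$) to a symmetric matrix so its kernel dimension is unambiguous. The genuinely computational part — and the main obstacle — is items~3 and~4, the multiplicity $m-n+2$ (resp.\ $m-n$) of the $\sigma_k$-eigenvalues. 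Here I would count the dimension of the eigenspace directly: an eigenfunction has the form $f(x) = (\cos k\pi x)(\Phi^+)^\top d + (\sin k\pi x)\,b$ with $d\in\mathbb{C}^n$ constrained by $d_r = (-1)^k d_s$ on each edge and $b\in\mathbb{C}^m$ constrained by \eqref{othercond}; the dimension is then ($\#$free $d$-parameters) $+$ ($\#$free $b$-parameters subject to \eqref{othercond}). When $k$ is even the constraint on $d$ is vacuous ($n$ parameters); when $k$ is odd it forces $d$ to be constant on the two sides of a bipartition, giving $2$ parameters if $G$ is bipartite and $0$ (i.e.\ $d=0$) if not. One then checks that \eqref{othercond} together with the $b$-defining relations cuts the $b$-space down by exactly $n-2$ dimensions in the bipartite/even cases, using the rank of the incidence structure and the fact that $\mathrm{rank}\,\Phi = n-1$ for a connected graph; this is where one must be careful about which rows of \eqref{othercond} are independent and to invoke von Below's argument~\cite{[Be85]}. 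I expect reconciling the bookkeeping $m = (\text{edges}) $ against $n-2$ to be the delicate point, but it is a finite-dimensional linear algebra computation once the eigenfunction ansatz is pinned down.
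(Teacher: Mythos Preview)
Your overall approach is exactly the paper's: specialize Theorem~\ref{spectrum1} to $C=I$, reduce $\det\mathcal{L}_C(\lambda)=0$ to $\cos\sqrt\lambda\in\sigma(\mathbb{P})$ via $\mathcal{L}_C(\lambda)=\sin^{-1}\sqrt\lambda\,(\cos\sqrt\lambda\cdot D-\mathbb{A})$, and for the $\sigma_k$-eigenvalues analyze the conditions on $d$ and $b$ from part~3 of that theorem, ultimately appealing to~\cite{[Be85]} for the rank computation. Items~1 and~2 go through as you describe.

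Your $d$-parameter counts for items~3 and~4, however, are wrong, and with them your bookkeeping cannot close. For even $k$ the condition $d_r=(-1)^k d_s=d_s$ on every edge is \emph{not} vacuous: by connectedness of $G$ it forces $d=c\cdot\mathbf{1}$, so one free parameter, not $n$. Likewise for odd $k$ in the bipartite case, $d_r=-d_s$ on every edge forces $d$ to take value $c$ on $V_1$ and $-c$ on $V_2$, again one parameter, not two (for odd $k$ and $G$ non-bipartite you correctly get $d=0$). On the $b$-side, since $\Sin k\pi=0$ and $\Cos k\pi=(-1)^k I$, condition~\eqref{othercond} becomes $(\Phi_w^+ - (-1)^k\Phi_w^-)b=0$. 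For even $k$ this is $\Phi_w b=0$, and $\operatorname{rank}\Phi_w=n-1$ for a connected graph, giving $m-n+1$ free $b$-parameters; for odd $k$ the matrix $\Phi_w^+ + \Phi_w^-$ (the unsigned incidence matrix) has rank $n-1$ if $G$ is bipartite and rank $n$ otherwise. Adding the $d$- and $b$-counts yields $1+(m-n+1)=m-n+2$ in the even and bipartite-odd cases, and $0+(m-n)=m-n$ in the non-bipartite-odd case. With these corrected counts your plan matches the paper's proof.
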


\begin{proof}
We use Theorem \ref{spectrum1} for $C=I$. For the spectral point
$\lambda=0$ the statement follows directly from Theorem
\ref{spectrum1}. Assume first that $\lambda\ne k^2\pi^2$ for any
$k\in\mathbb{Z}$. Then
\begin{equation*}
\mathcal{L}_{I}(\lambda)=\sin^{-1}\sqrt{\lambda}\left(\cos\sqrt{\lambda}\cdot
D-\mathbb{A}\right),
\end{equation*}
and the characteristic equation becomes
\begin{equation*}
 -\lambda \in \sigma \left( A\right) \Longleftrightarrow \det\left(\cos\sqrt{\lambda}\cdot D-\mathbb{A}\right)=0\Longleftrightarrow \det \left( \cos\sqrt{\lambda }\cdot I-\mathbb{P}\right) =0
\end{equation*}
for the transition matrix $D ^{-1}\mathbb{A}=\mathbb{P}$. The last equivalence says that
\begin{equation*}
-\lambda \in \sigma \left( A\right) \Longleftrightarrow
\cos\sqrt{\lambda } \in \sigma \left(
\mathbb{P}\right)\Longleftrightarrow \lambda =\left(
2l\pi\pm\mathrm{arc}\cos \alpha \right)^{2}
\end{equation*}
for some $\alpha \in \sigma \left(\mathbb{P}\right)$, $-1<\alpha <
1$, and $l\in\mathbb{Z}$. Since $\dim \ker (\cos\sqrt{\lambda}\cdot
D-\mathbb{A})=\dim \ker ( \cos\sqrt{\lambda }\cdot I-\mathbb{P})$,
statement 2 also follows by Theorem \ref{spectrum1}.2.

Now, let $\lambda= k^2\pi^2$ for some $k\in\mathbb{Z}\setminus
\{0\}$. Observe that condition \eqref{firstcond} in Theorem
\ref{spectrum1}.3 becomes
$$d_r=(-1)^k d_s\quad {\rm whenever}\quad\Gamma(\mv_r)\cap\Gamma(\mv_s)\ne\emptyset. $$
If $k$ is even this condition is always fulfilled for $d=c\cdot
\mathbf{1}$ for any $c\in \mathbb{R}$. Because the network is
assumed to be connected, there is no other solution. For odd $k$ we
can always choose $d=0$. However, we can find a nonzero $d$ only in
the case $G$ does not have any odd cycles, that is when $G$ is
bipartite -- hence, when his set of vertices can be divided into two
disjoint subsets $V_1$ and $V_2$ such that any edge of $G$ has one
endpoint in one and the other endpoint in the other subset. If this
holds, the coordinates of $d$ can be chosen in such a way that at
the places of vertex indices belonging to $V_1$ we set $c$ and at
the places of vertex indices belonging to $V_2$ we set $-c$,
$c\in\mathbb{R}$. By connectivity this are again all possible
solutions.

Since $\sin k\pi=0$ and $\cos k\pi=(-1)^k$, the other condition
\eqref{othercond} in Theorem \ref{spectrum1}.3 becomes
$$\left(\Phi_w^+ - (-1)^k \Phi_w^-\right)b=0.$$

Using the proof of~\cite[Theorem 5 (17)]{[Be85]} we obtain $3.$ and
$4.$
\end{proof}

\section{Stability results for the diffusion problem}

In the last section we are interested in the asymptotic behavior of
solutions to the problem \eqref{netcp}. By Corollary~\ref{comp}, the
corresponding semigroup $T_p(t)_{t\geq 0}$ on
$X_p=\left(L^p(0,1); \mu_j dx\right)^m$, $1\leq p<\infty$, has many nice
properties: it is contractive, compact, positive. These properties already yield
norm convergence of the solutions to an equilibrium
(cf.~\cite[Cor. V.2.15]{[EN00]}).

From the connectedness of our graph, used in the proof of Thm.
4.3.1, we obtain another useful property of the semigroup.

\begin{prop}
The semigroup $(T_p(t))_{t\geq 0}$ on $X_p$, $p\in [1,\infty)$, is
irreducible.
\end{prop}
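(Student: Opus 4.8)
\medskip
The plan is to show, in the spirit of the form methods used above, that $(T_2(t))_{t\ge 0}$ has no nontrivial closed invariant ideal, and then to transfer this to all $X_p$. Recall that $X_2=\prod_{j=1}^m L^2(0,1;\mu_j dx)$ is the $L^2$-space over the disjoint union $\bigsqcup_{j=1}^m(0,1)$, so its closed ideals are precisely the sets $I_{\Omega_0}:=\{f\in X_2: f\equiv 0\ \text{a.e. off }\Omega_0\}$ for measurable $\Omega_0\subseteq\bigsqcup_{j=1}^m(0,1)$, and $(T_2(t))_{t\ge 0}$ is irreducible exactly when no such $I_{\Omega_0}$ with $\Omega_0$ of measure neither zero nor full is invariant under the semigroup. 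Since $(T_2(t))_{t\ge 0}$ is the $C_0$-semigroup associated with the symmetric form $\ea$ on $V$ (Lemmas~\ref{main} and~\ref{ident}), I would invoke Ouhabaz' invariance criterion (cf.~\cite{[Ou04]}): a necessary condition for $I_{\Omega_0}$ to be invariant is that multiplication by $\mathbf 1_{\Omega_0}$ maps the form domain $V$ into itself. It thus suffices to prove: if $\mathbf 1_{\Omega_0}V\subseteq V$, then $\Omega_0$ is null or co-null.

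First I would reduce $\Omega_0$ to a union of entire edges. Put $\Omega_0^{(j)}:=\Omega_0\cap\me_j$ and apply the inclusion $\mathbf 1_{\Omega_0}V\subseteq V$ to the constant function $\mathbf 1\in V$ (which lies in $V$, indeed in $D(A)$, by Lemma~\ref{sA}): then $\mathbf 1_{\Omega_0^{(j)}}\in H^1(0,1)$ for every $j$. Since $H^1(0,1)\hookrightarrow C[0,1]$, an idempotent element of $H^1(0,1)$ is a continuous $\{0,1\}$-valued function on the connected interval $[0,1]$, hence equals $0$ or $1$ almost everywhere. So, up to a null set, $\Omega_0=\bigsqcup_{j\in J}\me_j$ for some $J\subseteq\{1,\dots,m\}$. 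The decisive step then uses connectedness, exactly as in the proof of Theorem~\ref{spectrum2}: assume $\emptyset\ne J\ne\{1,\dots,m\}$; since $G$ is connected there is a vertex $\mv_i$ incident both to some $\me_j$ with $j\in J$ and to some $\me_\ell$ with $\ell\notin J$. Applying $\mathbf 1_{\Omega_0}\cdot$ to $\mathbf 1\in V$ once more, the resulting function equals $1$ on $\me_j$ and $0$ on $\me_\ell$, so it attains the two distinct values $1$ and $0$ at the single vertex $\mv_i$ and therefore violates the vertex-continuity condition~\eqref{cont} defining $V$; hence $\mathbf 1_{\Omega_0}\notin V$, a contradiction. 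Consequently $\Omega_0$ is null or co-null and $(T_2(t))_{t\ge 0}$ is irreducible.

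Finally, since by Corollary~\ref{comp} the semigroups $(T_p(t))_{t\ge 0}$ are all consistent with $(T_2(t))_{t\ge 0}$, irreducibility propagates along the scale: a closed $(T_p(t))$-invariant ideal $I_{\Omega_0}\subseteq X_p$ yields, upon intersecting with $X_2$ when $p<2$ and upon closing in $X_2$ when $p>2$, a closed $(T_2(t))$-invariant ideal, necessarily trivial, which forces $\Omega_0$ to be null or co-null; hence $(T_p(t))_{t\ge 0}$ is irreducible on every $X_p$, $1\le p<\infty$. I expect the only genuinely essential point to be the interplay of the continuity condition built into $V$ with the connectedness of $G$ — the same mechanism already exploited in Section~\ref{secchareq} — while everything else is routine measure-theoretic bookkeeping and the standard consistency argument. (An alternative route would be to prove strict positivity of $R(\lambda,A)$ for $\lambda>0$ from an explicit Green's function, but that appears more laborious than the form-theoretic argument above.)
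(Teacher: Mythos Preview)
Your argument is correct, but it follows a different route from the paper's. The paper proceeds via mean ergodicity: since $X_2$ is reflexive and $(T_2(t))_{t\ge 0}$ is bounded, the semigroup is mean ergodic, and by Theorem~\ref{spectrum1}.1 (which is where connectedness of $G$ enters, through $\dim\ker\Phi_w\Phi^\top=1$) the mean ergodic projection is $P=\mathbf 1\otimes\mathbf 1$ onto $\langle\mathbf 1\rangle$. Any nonzero closed invariant ideal $J$ must then satisfy $PJ\subseteq J$, so $J$ contains a nonzero constant function and hence equals $X_2$. The passage to general $p$ is dispatched by citing~\cite[Thm.~7.2.2]{[Ar04]}.

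Your approach instead applies Ouhabaz' invariance criterion directly to the form domain: invariance of $I_{\Omega_0}$ forces $\mathbf 1_{\Omega_0}\in V$, and then $H^1$-regularity on each edge plus the vertex-continuity built into $V$, together with connectedness of $G$, rule out any nontrivial $\Omega_0$. This is more in the spirit of the variational methods of Sections~2--3 and is logically independent of the spectral analysis of Section~\ref{secchareq}; in particular you do not need to know in advance that $0$ is a simple eigenvalue. The paper's route, by contrast, is shorter once Theorem~\ref{spectrum1}.1 is in hand and shows neatly how the ergodic projection already encodes the graph's connectedness. Both arguments ultimately rest on the same underlying fact---a locally constant function on a connected graph is globally constant---but they package it differently.
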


\begin{proof}
It is enough to prove the statement for $p=2$, because the
irreducibility is inherited for the extrapolation semigroups in
$X_p,\, 1\leq p<\infty$, using Corollary \ref{comp} and
\cite[Theorem 7.2.2]{[Ar04]}. Since $X_2$ is reflexive and
$\left(T_2(t)\right)_{t\geq 0} $ is bounded, by~\cite[Example
V.4.7]{[EN00]} we have that the semigroup is \emph{mean ergodic}
(see~\cite[Definition V.4.3]{[EN00]}). By Theorem \ref{spectrum1}.1,
we obtain that the corresponding \emph{mean ergodic projection}
$P$ defined by
$$Px:=\lim_{r\to \infty} \frac{1}{r}\int_{0}^rT(s)xds$$
is the projection $\mathbf{1} \otimes \mathbf{1}$ onto the
subspace $<\!\!\mathbf{1}\!\!>$. Let $\{0\}\neq J\subset X_2$ be a
closed invariant ideal for $\left(T(t)\right)$, that is
$T(t)J\subset J,\, t\geq 0.$ Then also $PJ\subset J$ holds. By
definition, $PJ\subset <\!\!\mathbf{1}\!\!> $ and so $J$ contains
the closed ideal generated by a constant function -- hence the whole
space $X_2$. From this follows that the semigroup is irreducible.
\end{proof}

Knowing that our semigroup is irreducible we can now show its norm
convergence towards a projection of rank one.

\begin{cor}
For the semigroup $(T_p(t))_{t\geq 0}$ on $X_p$, $p\in [1,\infty)$,
the following hold.
\begin{enumerate}
\item The limit $Pf:=\lim_{t\to\infty}T_p(t)f$ exists for every
$f\in X_p$. \item $P$ is a strictly positive projection onto
$<\!\!\mathbf{1}\!\!>={\rm Ker} A$, the one-dimensional subspace
of $X_p$ spanned by the constant function ${\mathbf 1}$. \item For
every $\varepsilon > 0$ there exists  $M >0$ such that
\begin{equation}\label{convspeed}\Vert T_p(t)-P\Vert\le
Me^{(\varepsilon + \lambda_2) t} \qquad\hbox{for all }t\geq 0,\;
\end{equation} where $\lambda_2$ is the largest nonzero eigenvalue
of the generator $A$.
\end{enumerate} \end{cor}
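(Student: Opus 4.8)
The plan is to establish the three assertions in order, drawing on the structural facts already available: $(T_p(t))_{t\geq 0}$ is positive, contractive, compact, and now irreducible, and its generator $A_p$ has spectral bound $s(A)=0$ which is a simple eigenvalue with strictly positive eigenvector $\mathbf{1}$ (Lemma~\ref{sA} and Theorem~\ref{spectrum1}.1). First I would invoke the standard theory of irreducible positive semigroups: for a bounded, positive, irreducible $C_0$-semigroup whose generator has $0$ as a pole of the resolvent (here automatic, since compactness of the resolvent makes every spectral value a pole and $s(A)=0$ is a first-order pole because the eigenvalue is algebraically simple by Theorem~\ref{spectrum1}.1), the residue $P$ at $0$ is a strictly positive rank-one projection onto $\mathrm{Ker}\,A=\langle\mathbf{1}\rangle$; see~\cite[Cor.~V.2.15]{[EN00]} together with~\cite[C-III, Thm.~3.5]{[Ar04]} or the analogous results in~\cite{[Ar04]}. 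This immediately gives the projection $P$ appearing in the statement and its strict positivity, settling the structural content of assertion~2.

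Next, for the convergence in assertion~1, I would use that the semigroup is compact, hence \emph{eventually norm continuous} and, more to the point, that $A$ has compact resolvent so $\sigma(A)$ consists of isolated eigenvalues accumulating only at $-\infty$; by Lemma~\ref{sA} they all lie in $(-\infty,0]$, with $0$ isolated and simple. Therefore there is a spectral gap: if $\lambda_2<0$ denotes the largest nonzero eigenvalue, then $\sigma(A)\setminus\{0\}\subset(-\infty,\lambda_2]$. Splitting $X_p=\langle\mathbf{1}\rangle\oplus\mathrm{Ran}(I-P)$ into the spectral subspaces for $\{0\}$ and for $\sigma(A)\setminus\{0\}$, the restriction of $A$ to $\mathrm{Ran}(I-P)$ has spectral bound $\lambda_2<0$, so $T_p(t)(I-P)=(T_p(t)-P)$ converges to $0$; hence $T_p(t)f\to Pf$ for every $f$, which is assertion~1 and completes assertion~2.

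For the rate estimate~\eqref{convspeed} I would argue on $X_2$ first, where $A=A_2$ is self-adjoint, so $T_2(t)-P$ is self-adjoint with spectrum in $[e^{\lambda_2 t}, e^{\lambda_2 t}]$-type bounds; more precisely, by the spectral theorem $\Vert T_2(t)-P\Vert = \Vert e^{tA}(I-P)\Vert \leq e^{\lambda_2 t}$ for all $t\geq 0$, which is even sharper than claimed (no $\varepsilon$ is needed on $X_2$). To pass to general $p\in[1,\infty)$ I would restrict the analytic semigroup on $X_p$ to the invariant complemented subspace $\mathrm{Ran}(I-P)$; its generator there has spectral bound $\lambda_2$ (the spectrum being $p$-independent by Corollary~\ref{comp}), so for every $\varepsilon>0$ the growth bound of the restricted semigroup is at most $\lambda_2+\varepsilon$ — using that for analytic semigroups (or, alternatively, eventually norm continuous ones) the growth bound equals the spectral bound — and hence $\Vert T_p(t)-P\Vert=\Vert T_p(t)(I-P)\Vert\leq M e^{(\lambda_2+\varepsilon)t}$ for a suitable $M=M(\varepsilon,p)$. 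The main obstacle is the bookkeeping of the spectral projection $P$ being the same object in all the $X_p$ and compatible with the splitting: one must check that $P$ restricted from $X_2$ extends consistently to each $X_p$ (which follows since $P=\mathbf{1}\otimes\mathbf{1}$ is given by an explicit integral kernel, bounded on every $X_p$) and that the $p$-independence of the spectrum in Corollary~\ref{comp} transfers to $p$-independence of the spectral gap $\lambda_2$; once that is in place the rate estimate is routine.
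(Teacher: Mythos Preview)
Your argument is correct and follows essentially the same route as the paper: irreducibility together with positivity and the simplicity of the eigenvalue $0$ yields the strictly positive rank-one projection $P$, and the spectral gap combined with ``growth bound equals spectral bound'' for eventually norm continuous (or analytic) semigroups gives the rate. The paper's proof is terser---it simply invokes~\cite[Cor.~C-IV.2.10 and C-III.3.5(d)]{[Na86]} for (1)--(2) and~\cite[Cor.~V.3.2]{[EN00]} for (3)---whereas you unpack the mechanism, adding the (unnecessary but illuminating) self-adjoint $X_2$ estimate before extrapolating; note only that your reference ``C-III, Thm.~3.5'' belongs to~\cite{[Na86]}, not~\cite{[Ar04]}, and that your parenthetical ``eventually norm continuous'' is what actually covers $p=1$, since analyticity is only asserted for $p\in(1,\infty)$ in Corollary~\ref{comp}.
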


\begin{proof}
The first assertions follows from~\cite[Cor.C-IV.2.10]{[Na86]} and
the second one from ~\cite[C-III.3.5.(d)]{[Na86]} and Theorem
\ref{spectrum1}. Since $P$ is the residue corresponding to the
spectral value $\lambda_1=0$ and $0$ is a first order pole of the
resolvent, estimate (3.2) in~\cite[Cor.V.3.2]{[EN00]} yields the third
assertion.
\end{proof}

Note that the property of converging to an equilibrium does not
depend on the structure of the network. However, the speed of the
convergence towards a projection is  determined by the second
largest eigenvalue $\lambda_2$ of $A$ and thus relies on the
network.

Combining graph theory and results about the spectrum of $A$,
obtained in Section \ref{secchareq},  we can now draw some further
estimates containing graph parameters for the speed of convergence
in \eqref{convspeed}. Let us demonstrate this for the case when all
$c_j=1$ and the graph is \emph{regular}, that is, every vertex has
the same degree. This means that
$$\left|\Gamma(\mv_i)\right|=\gamma \text{ for all }
i=1,\dots,n, \quad\text{and}\quad D=\gamma\cdot I. $$ Two generic
examples of regular graphs are the $n$\emph{-cycle} $C_n$ and the
\emph{complete graph} $K_n$ on $n$ vertices (in the latter, every two
vertices are connected by an edge).

In this case the characteristic equation becomes $$-\lambda \in
\sigma \left( A\right) \Longleftrightarrow
\det\left(\cos\sqrt{\lambda}\cdot \gamma \cdot I-\mathbb{A}\right)=0
\Longleftrightarrow \cos\sqrt{\lambda}\cdot \gamma \in
\sigma\left(\mathbb{A}\right),$$ see the proof of Theorem
\ref{spectrum2} or~\cite[Sec.~6]{[Be85]}. Using the Laplacian of the
graph, defined in \eqref{graphlaplace}, we obtain for its spectrum
$$\nu \in \sigma\left(\mathcal{L}\right) \Longleftrightarrow \det\left(\nu \cdot I- (\gamma \cdot I
-\ba)\right)=0 \Longleftrightarrow -\nu +\gamma \in \sigma(\ba).
$$ Hence, investigating the spectrum of the generator $A$, we are looking for $\lambda$'s such that
\begin{equation}\label{specaspecl}\lambda= -\left( 2l\pi \pm \mathrm{arc}\cos \left(1-\frac{\nu}{\gamma}\right) \right)^{2},\, l\in
\mathbb{Z}\end{equation}where $\nu \in \sigma \left(\mathcal{L}\right)$. The spectrum of $\mathcal{L}$ is sometimes also called the \emph{spectrum of the graph} and is well investigated in graph theory. In the following we always refer to the survey paper~\cite{[Mo91]}.

\begin{exa}\label{spCK}
In the case when $G=C_n$, the eigenvalues of $\mathcal{L}$ are precisely the numbers
$$\nu_k=2-2\cos\left(\frac{2(k-1)\pi}{n}\right),\quad k=1,\dots ,n, $$
while for $G=K_n$ we have
$$\nu_1=0 \text{ and } \nu_k = n \text{ for } 2\leq k\leq n.$$
\end{exa}

As explained above we are interested in the \emph{second-smallest} eigenvalue $\nu_2 \in
\sigma\left(\mathcal{L}\right)$, which is also called the \emph{algebraic connectivity} of the graph, see
\cite{[Mo91]}. It is related to the classical connectivity parameters of the graph (see below). If we look at
\eqref{specaspecl} we can easily conclude that
\begin{equation}\label{lambda2nu2}\lambda_2= -\left(
\mathrm{arc}\cos \left(1-\frac{\nu_2}{\gamma}\right) \right)^{2},\end{equation} since the function
$\mathrm{arc}\cos$ is strictly monotone decreasing and assumes its values between $0$ and $\pi$.

\begin{exa} By increasing the number of vertices $n$, the convergence gets slower on the cycle $C_n$ and on the complete graph $K_n$. In fact, for $C_n$ we have $\gamma=2$, hence by Example \ref{spCK} we obtain that 
$$\lambda_2=-\left(\mathrm{arc}\cos \left(1-(1-\cos\frac{2\pi}{n})\right) \right)^{2}=-\frac{4\pi^2}{n^2}.$$ 
For $K_n$ we have $\gamma=n-1$, hence by
Example \ref{spCK}, $$\lambda_2=-\left( \mathrm{arc}\cos \left(1-\frac{n}{n-1}\right) \right)^{2}=-\left(
\mathrm{arc}\cos \left(-\frac{1}{n-1}\right) \right)^{2}.$$
\end{exa}

%
%
%

If we have an estimate from \emph{below} for $\nu_2$, using \eqref{convspeed} and formula \eqref{lambda2nu2}, we
obtain an \emph{upper} estimate for the convergence speed of the semigroup to the one-dimensional projection. In
\cite{[Mo91]} we find many estimates for $\nu_2$ that use several graph parameters. As an example we mention the
next.

\begin{defi}
The \emph{edge connectivity parameter} $\eta=\eta(G)$ of the graph $G$ is defined as the minimum number of edges
whose deletion from
 $G$ disconnects it.
\end{defi}
\begin{exa}
By \cite[Theorem 6.2(b)]{[Mo91]},
$$\nu_2\geq 2 \eta(1-\cos \frac{\pi}{n})$$ holds, where $\eta$ is the edge connectivity parameter of
the graph. From this we obtain for \eqref{convspeed} that for every $\varepsilon > 0$ there exists an $M > 0$
such that
$$\Vert T_p(t)-P\Vert\le Me^{(\varepsilon - ( \mathrm{arc}\cos (1-\frac{2\eta}{\gamma}(1-\cos \frac{\pi}{n})
))^{2}) t} \qquad\hbox{for all }t\geq 0.$$
\end{exa}
Another estimate can be obtained using the \emph{diameter} of $G$.
\begin{defi}
The \emph{diameter} of $G$ denoted by $\mathrm{diam}(G)$ is the largest number of vertices which must be
traversed in order to travel from one vertex to another when paths which backtrack, detour, or loop are excluded
from consideration.
\end{defi}
\begin{exa}
By \cite[(6.10)]{[Mo91]}, we have $$\nu_2\geq \frac{4}{n\cdot \mathrm{diam}(G)}.$$ Hence, again we can conclude
that for every $\varepsilon > 0$ there exists an $M > 0$ such that
$$\Vert T_p(t)-P\Vert\le Me^{(\varepsilon - ( \mathrm{arc}\cos (1-\frac{4}{\gamma\cdot n\cdot \mathrm{diam}(G)})
)^{2}) t} \qquad\hbox{for all }t\geq 0.$$
\end{exa}

{\small $ $\\
Marjeta Kramar Fijav\v{z}\\
Faculty of Civil and Geodetic Engineering\\
University of Ljubljana\\
Jamova 2\\
SI-1000 Ljubljana\\
Slovenia\\
{\tt marjeta.kramar@fgg.uni-lj.si}

$ $\\
Delio Mugnolo\\
Abteilung Angewandte Analysis\\
Universit{\"a}t Ulm\\
Helmholtzstra{\ss}e 18\\
D-89081 Ulm\\
Germany\\
{\tt delio.mugnolo@uni-ulm.de}\\
\emph{and}\\
Dipartimento di Matematica\\
Universit\`{a} degli Studi di Bari\\
Via Orabona 4\\
I-70125 Bari\\
Italy

$ $\\
Eszter Sikolya\\
Department of Applied Analysis\\
E\"{o}tv\"{o}s Lor\'{a}nd University\\
P\'{a}zm\'{a}ny P\'{e}ter st. 1/c\\
H-1117 Budapest\\
Hungary\\
{\tt seszter@cs.elte.hu}}
\end{document}